\pgfplotsset{compat=1.18}
\newcommand{\bs}{\boldsymbol}
\newcommand{\beqn}{\begin{equation}}
\newcommand{\eeqn}{\end{equation}}
\newtheorem{remark}{Remark}[section]
\newtheorem{theorem}{Theorem}[section]
\newtheorem{proposition}[theorem]{Proposition}
\def\blackbox{\leavevmode\vrule height 5pt width 4pt depth 0pt\relax}
\newenvironment{proof}{\begin{trivlist}
\item[]\hspace{0cm}{\bf Proof:}
\hspace{0cm} }{\hfill $\blackbox$
\end{trivlist}}
\newcommand\restr[2]{{
\left.\kern-\nulldelimiterspace 
#1 
\right|_{#2} 
}}
\newcommand{\mK}{\mathsf{K}}
\newcommand{\mO}{\mathsf{O}}
\newcommand{\mI}{\mathsf{I}}
\newcommand{\mV}{\mathsf{V}}
\newcommand{\mTheta}{\mathsf{\Theta}}
\newcommand{\mSigma}{\mathsf{\Sigma}}
\newcommand{\mLambda}{\mathsf{\Lambda}}
\newlength\fwidth
\definecolor{codegreen}{rgb}{0,0.6,0}
\definecolor{codegray}{rgb}{0.5,0.5,0.5}
\definecolor{codepurple}{rgb}{0.58,0,0.82}
\definecolor{backcolour}{rgb}{0.95,0.95,0.92}
\lstdefinestyle{mystyle}{
    backgroundcolor=\color{backcolour},   
    commentstyle=\color{codegreen},
    keywordstyle=\color{magenta},
    numberstyle=\tiny\color{codegray},
    stringstyle=\color{codepurple},
    basicstyle=\ttfamily\footnotesize,
    breakatwhitespace=false,         
    breaklines=true,                 
    captionpos=b,                    
    keepspaces=true,                 
    numbers=left,                    
    numbersep=5pt,                  
    showspaces=false,                
    showstringspaces=false,
    showtabs=false,                  
    tabsize=2
}
\title{Feature Understanding and Sparsity Enhancement via 2-Layered kernel machines (2L-FUSE)}
\author{F. Camattari$^{1,2,3}$, S. Guastavino$^{1,2,3}$, F. Marchetti$^{2,3,4}$, E. Perracchione$^{3,5}$\\
$^1$ MIDA, Dipartimento di Matematica, Università di Genova, Italy.\\
$^2$ Osservatorio Astrofisico di Torino, {Istituto Nazionale di Astrofisica}, Torino, {Italy}\\
$^3$ Gruppo Nazionale per il Calcolo Scientifico INdAM, Roma, Italy\\
$^4$ Dipartimento di Matematica \lq\lq Tullio Levi-Civita", Università di Padova,  {Italy}\\
$^5$ Dipartimento di Scienze Matematiche \lq\lq Giuseppe Luigi Lagrange\rq\rq, {Politecnico di Torino}, {Italy}}
\date{\today}
\begin{document}

\maketitle

\begin{abstract}
We propose a novel sparsity enhancement strategy for regression tasks, based on learning a data-adaptive kernel metric, i.e., a shape matrix, through 2-Layered kernel machines. The resulting shape matrix, which defines a Mahalanobis-type deformation of the input space, is then factorized via an eigen-decomposition, allowing us to identify the most informative directions in the space of features. This data-driven approach provides a flexible, interpretable and accurate feature reduction scheme. Numerical experiments on synthetic and applications to real datasets of geomagnetic storms demonstrate that our approach achieves minimal yet highly informative feature sets without losing predictive performance.
\end{abstract}

\section{Introduction}

We focus on supervised regression, where the goal is to approximate scattered data points in a feature space. In this context, sparsity enhancement plays a crucial role, as selecting the most relevant features is a key tool for both computational efficiency and model reliability. 
The most popular feature reduction procedures include Principal Component Analysis (PCA) \cite{jolliffe2002pca}, Lasso regression \cite{lasso}, or its variations, such as, e.g., Group Lasso \cite{yuan2006group}, Adaptive Lasso \cite{zou2006adaptive} and (Adaptive) Poisson Re-weighted Lasso \cite{guastavino2019consistent}. 

However, these methods often struggle to select relevant features when complex nonlinear dependencies are present. 
Appealing alternatives arise from kernel-based methods, which are naturally suited to capture nonlinear relationships between variables. Within this framework, several strategies have been explored, including greedy selection methods \cite{Camattari}, kernel PCA \cite{scholkopf1997kernelpca}, and feature ranking based on Support Vector Machines (SVMs) \cite{Guyon}. These approaches leverage the flexibility of kernels and lead to more interpretable feature selection in non-linear settings. However, we remark that the accuracy of the kernel models is known to depend on the scaling of the data. Hence, particularly for regression tasks, many efforts in literature are devoted to find suitable, i.e., safe, values for the shape parameter; see, e.g. \cite{cavoretto2021search,ling2022stochastic,mojarrad2023new,rippa1999algorithm,scheuerer2011alternative}. Moreover, other works deal with the more challenging problem of learning a \emph{shape matrix} \cite{aiolli2014learning} or a \emph{shape function}, based on intuitions on the Lebesgue constant and its generalization \cite{brunidrna,brunisisc}, instead of a single shape parameter \cite{audonekernel}. In this adaptive context, the 2-Layered kernel machines offer an elegant and flexible framework for learning the \emph{optimal} kernel metric \cite{Doppel2024759,Wenzel}: the learned kernel is not only anisotropic but allows rotations of the kernel basis that aligns more closely with the underlying structure of the data. 

In this paper, we propose a promising direction for sparsity enhancement in kernel-based models which, as first step, involves learning a data-adaptive kernel metric, i.e. a shape matrix, rather than relying on a fixed shape parameter. After theoretically studying the convergence properties of the so-constructed regressor, we perform an eigen-decomposition to the shape matrix which defines the optimal metric. This decomposition enables a Feature Understanding and Sparsity Enhancement via 2-Layered kernel machines (2L-FUSE). For the above reasons, the 2L-FUSE is more flexible than both SVM-based feature selection and kernel PCA that are typically trained with isotropic kernels. Moreover, unlike Kernel PCA, which performs dimensionality reduction by applying spectral decomposition to the kernel matrix, our approach learns a task-specific kernel metric and then performs reduction directly on the learned shape matrix. 

Furthermore, we are able to prove that reducing the features via the 2L-FUSE strategy is equivalent to perform a standard kernel regression with the original features mapped accordingly to the optimal recovered metric, leading to a simple and efficient implementation of the 2L-FUSE.  
These theoretical findings are confirmed by numerical tests carried out with both synthetic and real-world datasets of geomagnetic storm occurrence, showing that we are able to find a \emph{minimal} set of features without any accuracy loss. 

The paper is organized as follows. Section \ref{Preliminari} briefly recalls the basics of kernel interpolation. Section \ref{2lfuse} is the core of this study; it presents the theoretical results about the 2L-FUSE algorithm. Experiments and conclusions are offered respectively in Sections \ref{numerics} and \ref{conclusions}.

\section{Preliminaries}
\label{Preliminari}

For simplicity, we focus on kernel-based interpolation tasks, showing later that the same outcomes can be trivially extended to regression models.  
The scattered data interpolation problem can be formulated as follows. Given $X = \{ \boldsymbol{x}_1,\ldots,\boldsymbol{x}_n \} \subseteq \Omega$ a non-empty set of pairwise distinct interpolation points with $\Omega \subset \mathbb{R}^d$, $d \geq 1, d \in \mathbb{N}$ and $F =  \{f(\boldsymbol{x}_1), \ldots, f(\boldsymbol{x}_n)\}=\{f_1,\ldots,f_n\} \subseteq \mathbb{R}$ the set of  associated function values, find an approximation $s:\Omega \longrightarrow \mathbb{R}$ of the (unknown) function $f$ so that: 
\begin{equation}\label{interp_cond}
    s(\boldsymbol{x}_i) = f(\boldsymbol{x}_i), \quad i=1,\ldots,n. 
\end{equation}

As the scattered data interpolation problem might be in principle defined for \emph{large} dimensions $d$, it is worth considering meshless techniques \cite{fasshauer2007meshfree,fasshauer2015kernel,wendland2005scattered}. For instance, radial kernel-based schemes have the advantage of being easy to implement in any dimensions; indeed to any radial kernel we can associate a univariate function that only depends on the distance among points. More precisely, to each strictly positive definite kernel $\kappa: \Omega \times  \Omega \longrightarrow \mathbb{R}$ we can associate a unique Reproducing Kernel Hilbert Space $N_{\kappa} (\Omega)$ (RKHS), also known as native space, equipped with an inner product $\langle \cdot, \cdot \rangle$ that is made of  functions $f:\Omega \rightarrow \mathbb{R}$ for which $\kappa$ acts as a reproducing kernel, i.e.: 
\begin{itemize}
    \item $\kappa(\cdot, \boldsymbol{x}) \in N_{\kappa}(\Omega)$,  $ \boldsymbol{x} \in \Omega$,
    \item $f(\boldsymbol{x})=\langle f, \kappa(\cdot,\boldsymbol{x}) \rangle$, $ \boldsymbol{x} \in \Omega$, $ f \in N_{\kappa}(\Omega)$.
\end{itemize}

If we restrict to radial strictly positive definite kernels, then we can consider univariate functions known as Radial Basis Functions (RBFs), meaning that, we are able to uniquely associate to the kernel a function $ \phi: \mathbb{R}_{+} \longrightarrow\mathbb{R}$, where $\mathbb{R}_{+}= [0,+\infty)$, and (possibly) a shape parameter $\varepsilon>0$ such that, for all $\boldsymbol{x},\boldsymbol{z}\in \Omega$, 
\begin{equation*} 
    \kappa(\boldsymbol{x},\boldsymbol{z})= \kappa_{\varepsilon}(\boldsymbol{x},\boldsymbol{z})=  \phi( \sqrt{(\boldsymbol{x}-\boldsymbol{z})^{\intercal} \varepsilon^2 \cdot \mI_d (\boldsymbol{x}-\boldsymbol{z})}) = \phi( \varepsilon ||\boldsymbol{x}-\boldsymbol{z}||_2) =\phi_{\varepsilon}( ||\boldsymbol{x}-\boldsymbol{z}||_2)=  \phi(r),
\end{equation*}
where $\mI_d$ is the $d \times d$ identity matrix and $r=||\boldsymbol{x}-\boldsymbol{z}||_2$. In what follows we may omit the dependence of the kernel on the shape parameter.  

The scattered data interpolation problem then consists in finding the coefficients $\boldsymbol{c}=(c_1,\ldots,c_n)^{\intercal}$ of the interpolant 
\begin{equation*} 
    s(\cdot) \equiv \Pi_{V(X)}(f) = \sum_{i=1}^n c_i \kappa(\cdot,\boldsymbol{x}_i),
\end{equation*}
so that the conditions in equation \eqref{interp_cond} are satisfied and where $\Pi_{V(X)}(f)$ is the orthogonal projection onto the linear subspace $V(X) = \textrm{span} \{\kappa(\cdot, \boldsymbol{x}_i),\; \boldsymbol{x}_i \in  X \}$. Thus we need to solve a linear system of the form: 
\begin{equation*} 
    \mK  \boldsymbol{c} = \boldsymbol{f}, 
\end{equation*}
where $\boldsymbol{f}=(f_1,\ldots,f_n)^{\intercal}$ and 
\begin{equation*}
    \mK_{ij}=\kappa(\boldsymbol{x}_i,\boldsymbol{x}_j)= \phi \left( \sqrt{(\boldsymbol{x}_i-\boldsymbol{x}_j)^{\intercal} \varepsilon^2 \cdot \mI_d (\boldsymbol{x}_i-\boldsymbol{x}_j )}\right).
\end{equation*}
Under our assumptions, the kernel collocation matrix, which depends on the RBF and on the Euclidean distance  among the points, is non-singular and this in turn ensures that the interpolation problem has a unique solution, and we may write the nodal form of the interpolant as 
\begin{equation*}
     s\left( \boldsymbol{x}\right)=  \boldsymbol{\kappa}^{\intercal}({\boldsymbol{x}} ) \mK^{-1} \boldsymbol{f}, \quad \boldsymbol{x} \in \Omega,
\end{equation*}
where 
\begin{equation*}
    \boldsymbol{\kappa}^{\intercal}({\boldsymbol{x}} ) = \left(\kappa(\boldsymbol{x},\boldsymbol{x}_1), \ldots,\kappa (\boldsymbol{x},\boldsymbol{x}_n) \right).
\end{equation*}
    
Classical pointwise error bounds for kernel-based interpolants are of the form (see, e.g., \cite{wendland2005scattered}):
\begin{equation}\label{eq:bound_pow}
    |f(\boldsymbol{x})-s(\boldsymbol{x})| \leq P_{X} \| f - s \|_{N_{\kappa}(\Omega)}, \hskip 0.15cm \boldsymbol{x} \in \Omega, \hskip 0.15cm f \in N_{\kappa}(\Omega), 
\end{equation}
where $P_{X}$, known as \emph{power function}, is defined as 
\begin{align*}
    P_{X}(\boldsymbol{x}) = \| \kappa(\cdot, \boldsymbol{x})- \Pi_{V(X)}(\kappa(\cdot, \boldsymbol{x})) \|_{N_{\kappa}(\Omega)}.
\end{align*}

Other error indicators are based on the so-called fill-distance (which can be thought of as the mesh size for gridded data)  defined as
\begin{equation*}
 h_{X} =  \sup_{ \boldsymbol{x} \in \Omega} \left( \min_{ \boldsymbol{x}_k  \in {X}} \left\| \boldsymbol{x} - \boldsymbol{x}_k \right\|_2 \right).
\end{equation*}
Provided that $h_X$ is \textit{small enough} (see \cite[Theorem 3.2]{wendland2005scattered}), it relates to the pointwise error via the following inequality
\begin{equation}
	|f\left(\boldsymbol{x}\right)-s\left(\boldsymbol{x}\right) | \leq C h^{\gamma}_{X}  ||f||_{N_{\kappa}(\Omega)}, \quad \boldsymbol{x} \in \Omega,\hskip 0.15cm f \in N_{\kappa}(\Omega), 
\label{eq:bound_fill}
\end{equation}
where $\gamma>0$ depends on the kernel's smoothness.

In the next section, we will take advantage of the technique introduced in \cite{Wenzel}, namely the 2-Layered kernel machine, and we show how it naturally leads to sparsity enhancement. Such a method, called 2L-FUSE, will be studied in the following and later tested on various datasets. 

\section{2L-FUSE}
\label{2lfuse}

In order to introduce the 2L-FUSE idea, we remark that any isotropic radial kernel can be turned into an anisotropic one by using a weighted $2-$norm instead of an unweighted one. Indeed, it is enough to replace the scalar value of the shape parameter
with a diagonal matrix $\mSigma$. Doing in this way, different scalings along the $d$ dimensions are allowed. Nevertheless, if the matrix $\mSigma$ is diagonal, rotations of the kernel basis are not allowed. To accomplish this, one should find the \emph{optimal} shape matrix $\mTheta$, being $\mTheta$ a  $d \times d$ positive definite matrix such that $\mTheta=\mSigma^{\intercal}\mSigma$, with
\begin{equation*}
    \mSigma=
    \begin{pmatrix}
        \varepsilon_{11} & \dots & \varepsilon_{1d}\\
        \vdots & \ddots & \vdots \\
        \varepsilon_{d1} & \dots & \varepsilon_{dd}
    \end{pmatrix}.
\end{equation*}
The associated kernel, rescaled and rotated accordingly with $\mSigma$, is denoted by $\kappa_{\mSigma}$ and is so that
   \begin{equation} \label{eq:linear_two_layered_kernel}
 \kappa_{\mSigma}(\boldsymbol{x},\boldsymbol{z})= \phi( \sqrt{(\boldsymbol{x}-\boldsymbol{z})^{\intercal} \mSigma^{\intercal} \mSigma  (\boldsymbol{x}-\boldsymbol{z})}), \quad \boldsymbol{x}, \boldsymbol{z} \in \Omega.
    \end{equation}

In \cite{Wenzel}, by using as loss function the cross validation error \cite{marchetti2021extension}, the authors show that they are able to efficiently optimize the matrix $\mSigma$ and that $\kappa_{\mSigma}$ in \eqref{eq:linear_two_layered_kernel} is an instance of a two-layered kernel according to the deep kernel representer \cite[Theorem 1]{bohn2019representer}. 

Then, the 2L algorithm returns an optimized shape matrix which is used to define the interpolant $s_{\mSigma}$ (based on the the Mahalanobis distance) as
\begin{equation} \label{eq:optimal_kernel}
    s_{\mSigma}(\boldsymbol{x})  = \sum_{i=1}^n c^{\mSigma}_i \kappa_{\mSigma}(\boldsymbol{x},\boldsymbol{x}_i) = \sum_{i=1}^n c^{\mSigma}_i \kappa(\mSigma \boldsymbol{x}, \mSigma \boldsymbol{x}_i) = s( \mSigma \boldsymbol{x}), \quad \boldsymbol{x} \in \Omega, 
\end{equation}
where $\boldsymbol{c}^{\mSigma}=({c}_1^{\mSigma}, \ldots, {c}_n^{\mSigma})^{\intercal}$ is so that
\begin{equation} \label{eq:sys_sig}
    \mK_{\mSigma}  \boldsymbol{c}^{\mSigma} = \boldsymbol{f}, 
\end{equation}
with non-singular kernel matrix $(\mK_{\mSigma})_{ij} = \kappa(\mSigma \boldsymbol{x}_i, \mSigma \boldsymbol{x}_j) $, $i,j=1,\ldots, n$.  In the following, before introducing our idea of feature reduction schemes based on $s_{\mSigma}$, we first note that the legitimacy of the interpolation scheme is preserved, indeed we are able to re-interpret the results on the power function and fill-distance; refer to equations \eqref{eq:bound_pow} and \eqref{eq:bound_fill}.

\subsection{2L convergence properties}

In order to show that we are able to directly recover results in line with \eqref{eq:bound_pow} and \eqref{eq:bound_fill}, from now on we restrict ourselves to the case where $\mSigma$ is invertible. Then, using the kernel $\kappa_{\mSigma}$ on the original dataset $X$ is equivalent to employ a standard kernel on the set of nodes mapped by the linear transform $\mSigma$. Indeed, thanks to \eqref{eq:linear_two_layered_kernel} we have that $
\kappa({\mSigma}\boldsymbol{x},{\mSigma}\boldsymbol{z}) = \kappa_{\mSigma}(\boldsymbol{x},\boldsymbol{z})$. Moreover, letting $\Psi$ defined as the set $\Omega$ to which we apply the linear transform $\mSigma$, as the spaces ${\cal N}_{\kappa_{\mSigma}} (\Omega)$ and  ${\cal N}_{\kappa} (\Psi)$ are isometrically isomorphic \cite{Bozzini2015199,vskmpi}. 
Then, we can write a convergence bound for the mapped kernel, in the spirit of \eqref{eq:bound_fill}. We remark that the following result is an extension of \cite[Theorem 3.3]{Wenzel}, where the authors focused on the case of Matérn kernels.
\begin{proposition}
    Let $\kappa_{\mSigma} \in C^{2\gamma}(\Omega\times \Omega
	)$ be a strictly positive definite kernel. Then there exist positive constants $h_0$ and $C$, such that (see the link with \eqref{eq:bound_fill})
	\begin{equation*}
	| f\left(\boldsymbol{x}\right)- s_{\mSigma} \left(\boldsymbol{x}\right) |  \leq C h^{\gamma}_{ \mSigma X} \sqrt{C_{\kappa_{\mSigma}}(\boldsymbol{x})} \sqrt{\boldsymbol{f}^{\intercal}{\mK}_{\mSigma}^{-1} \boldsymbol{f}},
	\end{equation*}
	   provided that $0<h_{\mSigma X} \leq h_0$ and $f \in {\cal N}_{\kappa_{\mSigma}} (\Omega)$, 	where 
	\begin{equation*}
	C_{\kappa_{\mSigma}}(\boldsymbol{x}) = C_{\kappa}(\mSigma \boldsymbol{x}) = \max_{ |\boldsymbol{\beta} |=2\gamma} \left( \max_{\mSigma{\boldsymbol{v}},\mSigma {\boldsymbol{w}}  \in \Psi \cap B\left(\mSigma{\boldsymbol{x}},C_2h_{ \mSigma {X}}\right)} \left| D^{\boldsymbol{\beta} }_2  \kappa\left(\mSigma{\boldsymbol{v}},\mSigma{\boldsymbol{w}}\right) \right| \right),
	\end{equation*}
	where $C_2$ is a suitable constant defined in \cite[Theorem 14.4 p. 123]{fasshauer2007meshfree},  and
	where 
	$B\left(\mSigma{\boldsymbol{x}},C_2h_{ \mSigma{X}}\right)$ denotes the ball of radius $C_2h_{ \mSigma {X}_n}$ centred at $\mSigma {\boldsymbol{x}}$.
\end{proposition}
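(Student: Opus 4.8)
The plan is to reduce the statement entirely to the classical fill-distance bound \eqref{eq:bound_fill} applied on the transformed node set $\mSigma X$, exploiting the equivalence $\kappa_{\mSigma}(\boldsymbol{x},\boldsymbol{z}) = \kappa(\mSigma\boldsymbol{x},\mSigma\boldsymbol{z})$ already established in the text. First I would invoke the isometric isomorphism between ${\cal N}_{\kappa_{\mSigma}}(\Omega)$ and ${\cal N}_{\kappa}(\Psi)$, where $\Psi = \mSigma\Omega$, so that every function $f \in {\cal N}_{\kappa_{\mSigma}}(\Omega)$ corresponds to a function $\tilde f \in {\cal N}_{\kappa}(\Psi)$ with $\tilde f(\mSigma\boldsymbol{x}) = f(\boldsymbol{x})$ and with equal native-space norms. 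Under this correspondence the interpolant $s_{\mSigma}$ built from $\kappa_{\mSigma}$ on $X$ is carried exactly to the standard $\kappa$-interpolant $\tilde s$ built on the mapped nodes $\mSigma X$, because of identity \eqref{eq:optimal_kernel}, namely $s_{\mSigma}(\boldsymbol{x}) = s(\mSigma\boldsymbol{x})$. Consequently the pointwise error is preserved: $|f(\boldsymbol{x}) - s_{\mSigma}(\boldsymbol{x})| = |\tilde f(\mSigma\boldsymbol{x}) - \tilde s(\mSigma\boldsymbol{x})|$.

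Next I would apply the sharpened fill-distance estimate from \cite[Theorem 14.4 p.\ 123]{fasshauer2007meshfree} to the right-hand side, working entirely in the image space with the point $\mSigma\boldsymbol{x} \in \Psi$, the node set $\mSigma X$, and the fill-distance $h_{\mSigma X}$. That theorem produces precisely a bound of the form $C\, h_{\mSigma X}^{\gamma}\sqrt{C_{\kappa}(\mSigma\boldsymbol{x})}$ times the $\sqrt{\boldsymbol{f}^{\intercal}\mK_{\mSigma}^{-1}\boldsymbol{f}}$ term, where the local derivative factor $C_{\kappa}(\mSigma\boldsymbol{x})$ is the maximal size of the $2\gamma$-order second-argument derivatives of $\kappa$ over the ball $B(\mSigma\boldsymbol{x},C_2 h_{\mSigma X})$ intersected with $\Psi$. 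The quadratic form $\sqrt{\boldsymbol{f}^{\intercal}\mK_{\mSigma}^{-1}\boldsymbol{f}}$ is exactly the native-space norm of the interpolant $s_{\mSigma}$ and, by the reproducing property combined with the isometry, equals the corresponding quantity $\sqrt{\boldsymbol{f}^{\intercal}\mK^{-1}\boldsymbol{f}}$ in the image space, so no adjustment is needed there. Identifying $C_{\kappa_{\mSigma}}(\boldsymbol{x})$ with $C_{\kappa}(\mSigma\boldsymbol{x})$ then yields the claimed inequality, valid once $0 < h_{\mSigma X} \le h_0$ for the threshold $h_0$ supplied by the cited theorem.

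The step requiring the most care is verifying that the regularity hypothesis transfers correctly: the cited estimate demands $\kappa \in C^{2\gamma}(\Psi\times\Psi)$, whereas the statement assumes $\kappa_{\mSigma} \in C^{2\gamma}(\Omega\times\Omega)$. Since $\mSigma$ is linear and invertible, $\Psi = \mSigma\Omega$ is open and the chain rule shows that $\kappa$ has continuous derivatives up to order $2\gamma$ on $\Psi\times\Psi$ if and only if $\kappa_{\mSigma}$ does on $\Omega\times\Omega$; I would record this equivalence explicitly, noting that the derivative transformation merely contracts the multi-index $\boldsymbol{\beta}$ against powers of $\mSigma$ and therefore only affects the constant $C$, not the order $\gamma$ of the estimate. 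A secondary technical point is that the ball $B(\mSigma\boldsymbol{x},C_2 h_{\mSigma X})$ must be intersected with $\Psi$ (rather than $\R^d$) to match the domain constraints of the original theorem, which is why the definition of $C_{\kappa_{\mSigma}}$ restricts the inner maximisation to $\mSigma\boldsymbol{v},\mSigma\boldsymbol{w} \in \Psi \cap B(\mSigma\boldsymbol{x},C_2 h_{\mSigma X})$.

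Finally I would remark that the whole argument is merely a change of variables: the content of the proposition is that the 2-layered kernel machine inherits, verbatim and with no loss in convergence order $\gamma$, the classical error theory once one agrees to measure fill-distance and local smoothness in the $\mSigma$-deformed geometry. This also explains why the result extends the Matérn-specific \cite[Theorem 3.3]{Wenzel} to arbitrary strictly positive definite $\kappa_{\mSigma} \in C^{2\gamma}$, since nothing in the transfer depends on the particular kernel family.
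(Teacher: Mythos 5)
Your proposal is correct and follows essentially the same route as the paper: both arguments rest on the isometric isomorphism between ${\cal N}_{\kappa_{\mSigma}}(\Omega)$ and ${\cal N}_{\kappa}(\Psi)$ to move the problem onto the mapped nodes $\mSigma X$, and then invoke the classical power-function/fill-distance estimate of \cite[Theorems 14.4--14.5]{fasshauer2007meshfree} in the image space, identifying $\sqrt{\boldsymbol{f}^{\intercal}\mK_{\mSigma}^{-1}\boldsymbol{f}}$ with the relevant native-space norm term. The differences are presentational only: the paper first re-derives the power-function bound inside ${\cal N}_{\kappa_{\mSigma}}(\Omega)$ via the reproducing property and Cauchy--Schwarz and then transfers it through the isometry, whereas you transfer first and cite the classical bound directly, additionally making explicit the $C^{2\gamma}$ regularity transfer under $\mSigma$, which the paper leaves implicit.
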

\begin{proof}
    Letting $\boldsymbol{\kappa}_{\mSigma}^{\intercal}({\boldsymbol{x}} ) = \left(\kappa_{\mSigma}(\boldsymbol{x},\boldsymbol{x}_1), \ldots,\kappa_{\mSigma} (\boldsymbol{x},\boldsymbol{x}_n) \right)$, the coefficients $\boldsymbol{{c}}_{\mSigma}={\mK}_{\mSigma}^{-1} \boldsymbol{f}$ are so that
 \begin{equation*}
 s_{\mSigma}(\bs{x}) = \boldsymbol{\kappa}_{\mSigma}^{\intercal}(\boldsymbol{x}) {\mK}_{\mSigma}^{-1}  \boldsymbol{f},
 \end{equation*}
 or, equivalently, because of the symmetry of the kernel matrix:
 \begin{equation}
 s_{\mSigma}(\bs{x}) = \boldsymbol{f}^{\intercal} {\mK}_{{\mSigma}}^{-1} \boldsymbol{k}_{\mSigma}(\boldsymbol{x}).
 \label{eq6}
 \end{equation}
 Moreover, thanks to the reproducing property, we have that 
 \begin{align*}
 \boldsymbol{f}^{\intercal} & = \left( \langle f,\kappa_{\mSigma}(\cdot,\boldsymbol{x}_1)\rangle_{{\cal N}_{\kappa_{\mSigma}} (\Omega)}, \ldots, \langle f,\kappa_{\mSigma}(\cdot,\boldsymbol{x}_N)\rangle_{{\cal N}_{\kappa_{\mSigma}} (\Omega)} \right),\\
 & = \langle f,(\kappa_{\mSigma}(\cdot,\boldsymbol{x}_1), \ldots, \kappa_{\mSigma}(\cdot,\boldsymbol{x}_N))\rangle_{{\cal N}_{\kappa_{\mSigma}} (\Omega)},\\
 & = \langle f, \boldsymbol{\kappa}_{\mSigma}^{\intercal}(\cdot)\rangle_{{\cal N}_{\kappa_{\mSigma}} (\Omega)}.
 \end{align*}
 By plugging this into \eqref{eq6}, we obtain
 \begin{equation*}
 s_{\mSigma}(\bs{x}) = \langle f, \boldsymbol{\kappa}_{\mSigma}^{\intercal}(\cdot)\rangle_{{\cal N}_{\kappa_{\mSigma}} (\Omega)} \mK_{\mSigma}^{-1} \boldsymbol{\kappa}_{\mSigma}(\boldsymbol{x}) = \langle f, \boldsymbol{\kappa}_{\mSigma}^{\intercal}(\cdot)\mK_{{\mSigma}}^{-1} \boldsymbol{\kappa}_{\mSigma}(\boldsymbol{x})\rangle_{{\cal N}_{\kappa_{\mSigma}} (\Omega)}.
 \label{eq7}
 \end{equation*}
Thus (see the link with \eqref{eq:bound_pow}), 
\begin{align*}
|s_{\mSigma}(\bs{x})-f(\bs{x})| & \leq ||f||_{{\cal N}_{\kappa_{\mSigma}} (\Omega)} || \kappa_{{\mSigma}}(\cdot,\boldsymbol{x})-\boldsymbol{\kappa}_{{\mSigma}}^{\intercal}(\cdot) \mK_{{\mSigma}}^{-1} \boldsymbol{\kappa}_{{\mSigma}}(\boldsymbol{x}) ||_{{\cal N}_{\kappa_{\mSigma}} (\Omega)} \nonumber \\
& = ||f||_{{\cal N}_{\kappa_{\mSigma}} (\Omega)} || \kappa(\cdot,\mSigma{\boldsymbol{x}})-\boldsymbol{\kappa}^{\intercal}(\cdot) \mK_{\mSigma}^{-1} \boldsymbol{\kappa}(\mSigma{\boldsymbol{x}}) ||_{{\cal N}_{\kappa} (\Psi)} \nonumber\\
& = ||f||_{{\cal N}_{\kappa_{\mSigma}} (\Omega)} {P}_{\mSigma X}(\mSigma {\boldsymbol{x}}). 
\end{align*}
Then, as shown in e.g. \cite[Theorem 14.5]{fasshauer2007meshfree}, we have that
\begin{align*}
{P}_{\mSigma X}(\mSigma {\boldsymbol{x}})  \leq  C h^{\gamma}_{ \mSigma{X}} \sqrt{C_{\kappa}(\mSigma{\boldsymbol{x}})},
\end{align*}
and the thesis follows by observing that $  ||f||_{{\cal N}_{\kappa_{\mSigma}} (\Omega)}  = \sqrt{\boldsymbol{f}^{\intercal}{\mK}_{\mSigma}^{-1} \boldsymbol{f}}$.
\end{proof} 
As a note, we point out that we can obtain faster convergence rates if $h_{\mSigma{X}} <h_{X}$, and a faster convergence order if $\mSigma$ is rank deficient (see \cite[Theorem 3.4]{Wenzel}). Nevertheless, the novelty in this work consists in exploiting properties of the optimized kernel matrix $\mK_{\mSigma}$ in order to infer the importance of the features in the dataset. 

\subsection{2L for feature ranking}

In order to introduce our intuition behind the feature reduction scheme, we first recall that the matrix $\mTheta$ is positive definite, and hence we might write 
$$
\mTheta = \mV \mLambda \mV^{\intercal}, 
$$
where the columns of $\mV$ are the eigenvectors $\boldsymbol{v}_1^{\intercal}, \ldots, \boldsymbol{v}_d^{\intercal}$ of $\mTheta$ and the associated real positive eigenvalues $\lambda_{1}, \ldots, \lambda_d $ are stored (without loss of generality in a decreasing order) in the matrix $\mLambda= \mathrm{diag}(\lambda_{1}, \ldots, \lambda_{d})$. The key idea for sparsity enhancement lies in the fact that computing $s_{\Sigma}$ is equivalent to perform a standard kernel interpolation with the original features mapped accordingly to the optimal recovered metric. Indeed, we have the following result. 

\begin{proposition}
    Computing the interpolant with the kernel $\kappa_{\mSigma}$ as in \eqref{eq:optimal_kernel} is equivalent to computing it with a standard kernel on the the mapped points 
    \begin{align*}
        \bar{\boldsymbol{x}}_i = ({\bar{x}}_{i}^1, \ldots, {\bar{x}}_{i}^d)^{\intercal} & =  \left(\sum_{j=1}^d \sqrt{\lambda_1}v_1^j{{x}}_{i}^j, \ldots, \sum_{j=1}^d \sqrt{\lambda_d}v_d^j{{x}}_{i}^j\right)^{\intercal} \\  = &  \left( \sqrt{\lambda}_1{\boldsymbol{v}}_1^{\intercal} {\boldsymbol{x}}_i, \ldots, \sqrt{\lambda}_d{\boldsymbol{v}}_d^{\intercal} {\boldsymbol{x}}_i\right )^{\intercal}, \quad i= 1,\ldots,n.
    \end{align*}
\end{proposition}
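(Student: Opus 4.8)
The plan is to reduce the anisotropic kernel $\kappa_{\mSigma}$ to the standard (isotropic) kernel $\kappa$ evaluated on linearly transformed nodes, by exploiting the eigendecomposition $\mTheta = \mV\mLambda\mV^{\intercal}$ together with the fact that $\kappa_{\mSigma}$ in \eqref{eq:linear_two_layered_kernel} depends on $\mSigma$ only through the product $\mSigma^{\intercal}\mSigma = \mTheta$. First I would introduce the matrix square root $\mLambda^{1/2} = \mathrm{diag}(\sqrt{\lambda_1}, \ldots, \sqrt{\lambda_d})$, which is well defined and real since $\mTheta$ is positive definite and hence every $\lambda_k>0$. I would then set $\mSigma_\star = \mLambda^{1/2}\mV^{\intercal}$ and observe that it is an admissible shape matrix, because $\mSigma_\star^{\intercal}\mSigma_\star = \mV\mLambda^{1/2}\mLambda^{1/2}\mV^{\intercal} = \mV\mLambda\mV^{\intercal} = \mTheta$; by the above invariance, replacing $\mSigma$ with $\mSigma_\star$ leaves $\kappa_{\mSigma}$ unchanged.

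The central computation is then to identify the image of a node under $\mSigma_\star$ with the claimed mapped point. Writing $\mSigma_\star \boldsymbol{x}_i = \mLambda^{1/2}\mV^{\intercal}\boldsymbol{x}_i$ component by component, its $k$-th entry is $\sqrt{\lambda_k}\,\boldsymbol{v}_k^{\intercal}\boldsymbol{x}_i = \sqrt{\lambda_k}\sum_{j=1}^d v_k^j x_i^j$, which is exactly $\bar{x}_i^k$; hence $\bar{\boldsymbol{x}}_i = \mSigma_\star \boldsymbol{x}_i$. A short calculation then shows that the Mahalanobis distance induced by $\mTheta$ equals the Euclidean distance between the mapped points,
\[
\|\bar{\boldsymbol{x}} - \bar{\boldsymbol{z}}\|_2^2 = (\boldsymbol{x}-\boldsymbol{z})^{\intercal}\mSigma_\star^{\intercal}\mSigma_\star(\boldsymbol{x}-\boldsymbol{z}) = (\boldsymbol{x}-\boldsymbol{z})^{\intercal}\mTheta(\boldsymbol{x}-\boldsymbol{z}),
\]
so that $\kappa_{\mSigma}(\boldsymbol{x},\boldsymbol{z}) = \phi(\|\bar{\boldsymbol{x}} - \bar{\boldsymbol{z}}\|_2) = \kappa(\bar{\boldsymbol{x}}, \bar{\boldsymbol{z}})$, i.e.\ the standard kernel on the transformed data.

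Finally I would transfer this pointwise identity to the whole interpolation scheme. Since $(\mK_{\mSigma})_{ij} = \kappa_{\mSigma}(\boldsymbol{x}_i,\boldsymbol{x}_j) = \kappa(\bar{\boldsymbol{x}}_i,\bar{\boldsymbol{x}}_j)$, the kernel matrix and the right-hand side of the linear system \eqref{eq:sys_sig} coincide with those of a standard interpolation problem on $\{\bar{\boldsymbol{x}}_i\}$, so the coefficient vectors agree; applying the same identity to the evaluation vector $\boldsymbol{\kappa}_{\mSigma}(\boldsymbol{x})$ yields equality of the two interpolants, consistently with $s_{\mSigma}(\boldsymbol{x}) = s(\mSigma\boldsymbol{x})$ in \eqref{eq:optimal_kernel} for the choice $\mSigma = \mSigma_\star$. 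The only point requiring care — the closest thing to an obstacle — is that $\mSigma$ is determined by $\mTheta$ only up to a left orthogonal factor, so one must explicitly select the eigendecomposition-based factorization $\mSigma_\star = \mLambda^{1/2}\mV^{\intercal}$ in order to recover precisely the coordinate expression of $\bar{\boldsymbol{x}}_i$ stated in the proposition; every remaining step is a routine matrix manipulation.
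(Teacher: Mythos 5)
Your proof is correct and takes essentially the same approach as the paper's: both rest on the eigendecomposition $\mTheta = \mV \mLambda \mV^{\intercal}$ to rewrite the Mahalanobis quadratic form $(\boldsymbol{x}-\boldsymbol{z})^{\intercal}\mTheta(\boldsymbol{x}-\boldsymbol{z})$ as the squared Euclidean distance between the mapped points $\bar{\boldsymbol{x}} = \mLambda^{1/2}\mV^{\intercal}\boldsymbol{x}$. The paper performs this identification component-wise inside the expansion of $s_{\mSigma}$, whereas you organize it at the matrix level via $\mSigma_\star = \mLambda^{1/2}\mV^{\intercal}$ and the invariance of $\kappa_{\mSigma}$ under factorizations of $\mTheta$ --- a presentational difference, not a different argument.
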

\begin{proof}
The proof is straightforward, indeed for $i=1,\ldots,n$, we have that 
\begin{align*}
   & s_{\mSigma}(\boldsymbol{x}) = \sum_{i=1}^n  c_i^{\mSigma} \kappa_{\mSigma}(\boldsymbol{x},\boldsymbol{x}_i) = \sum_{i=1}^n  c_i^{\mSigma} \phi(\sqrt{(\boldsymbol{x}-\boldsymbol{x}_i)^{\intercal} \mTheta (\boldsymbol{x}-\boldsymbol{x}_i)}) = \\
   & \sum_{i=1}^n  c_i^{\mSigma} \phi(\sqrt{(\boldsymbol{x}-\boldsymbol{x}_i)^{\intercal} \mV \mLambda \mV^{\intercal} (\boldsymbol{x}-\boldsymbol{x}_i)}) = \\
    & \sum_{i=1}^n  c_i^{\mSigma} \phi\left(\sqrt{\left( \sum_{j=1}^d \sqrt{\lambda_1}v_1^j({{x}}^j-{{x}}_{i}^j), \ldots, \sum_{j=1}^d \sqrt{\lambda_d}v_d^j({{x}}^j-{{x}}_{i}^j)\right)\begin{pmatrix}
         \sum_{j=1}^d \sqrt{\lambda_1}v_1^j({{x}}^j-{{x}}_{i}^j) \\ 
         \ldots \\
         \sum_{j=1}^d \sqrt{\lambda_d}v_d^j({{x}}^j-{{x}}_{i}^j))
    \end{pmatrix} } \right) = \\
    &\sum_{i=1}^n  c_i^{\mSigma} \phi\left(\sqrt{\left( \sqrt{\lambda}_1{\boldsymbol{v}}_1^{\intercal} {(\boldsymbol{x}-\boldsymbol{x}_i)}, \ldots, \sqrt{\lambda}_d{\boldsymbol{v}}_d^{\intercal} {(\boldsymbol{x}-\boldsymbol{x}_i)}\right ) \begin{pmatrix}
    \sqrt{\lambda}_1{\boldsymbol{v}}_1^{\intercal} {(\boldsymbol{x}-\boldsymbol{x}_i)} \\ 
    \ldots \\
    \sqrt{\lambda}_d{\boldsymbol{v}}_d^{\intercal} {(\boldsymbol{x}-\boldsymbol{x}_i)}
    \end{pmatrix}} \right) =\\
     & \sum_{i=1}^n  c_i^{\mSigma} \phi(\sqrt{(\bar{\boldsymbol{x}}-\bar{\boldsymbol{x}}_i)^{\intercal} (\bar{\boldsymbol{x}}-\bar{\boldsymbol{x}}_i)}) = \sum_{i=1}^n  c_i^{\mSigma} \kappa
    (\bar{\boldsymbol{x}},\bar{\boldsymbol{x}_i}) = s(\bar{\boldsymbol{x}}).
\end{align*}
\end{proof}

In view of the above proposition, the idea of our dimensionality reduction easily follows. Indeed, if there exists an index $p$, $1\leq p < d$ so that $\lambda_{1} \geq \ldots \lambda_{p} \gg \lambda_{p+1} \geq \ldots \geq \lambda_{d}$, as usually done in each compression technique, we can set $\lambda_{p+1} = \ldots = \lambda_{d}=0$. In this way instead of taking all the features, we restrict to $p$ of them defined as linear combination of eigenvectors and original features properly scaled accordingly to the associated eigenvalues, i.e., we limit the analysis to $\mathbb{R}^p$ defined as $\tilde{\boldsymbol{x}}_i = ({\bar{x}}_{i}^1, \ldots, {\bar{x}}_{i}^p)^{\intercal} = \left( \sum_{j=1}^d \sqrt{\lambda_1}v_1^j{{x}}_{i}^j, \ldots, \sum_{j=1}^d \sqrt{\lambda_p}v_p^j{{x}}_{i}^j\right)^{\intercal}$, $i=1,\ldots,n$. In this case, we obtain an interpolant defined on a reduced number of features, formally defined as
$$
s_{\tilde{\mSigma}} (\boldsymbol{x}) = \sum_{i=1}^n c_i^{\tilde{\mSigma}}  \phi \left( \sqrt{(\boldsymbol{x}-\boldsymbol{x}_i)^{\intercal} \tilde{\mTheta} (\boldsymbol{x}-\boldsymbol{x}_i)}\right) = \sum_{i=1}^n c_i^{\tilde{\mSigma}}  \phi \left( \sqrt{(\tilde{\boldsymbol{x}}-\tilde{\boldsymbol{x}}_i)^{\intercal} (\tilde{\boldsymbol{x}}-\tilde{\boldsymbol{x}}_i)} \right) = s(\tilde{\boldsymbol{x}}), 
$$
where
$$
\tilde{\mTheta} =  \mV \begin{pmatrix}
\tilde{\mLambda} &  \\
 & \mO
\end{pmatrix}
{\mV}^{\intercal}, 
$$
being $\tilde{\mLambda} = \textrm{diag}(\lambda_1,\ldots,\lambda_p)$, and $\mO $ the ${(d-p) \times (d-p)}$ null matrix. Note that for the particular case of $p=1$, the mapped and reduced dataset would be defined by the only feature $\sum_{j=1}^d \sqrt{\lambda_1}v_1^j{{x}}_{i}^j$.


\begin{remark}[The diagonal case]
\label{diag}
    Although learning the Mahalanobis distance is more challenging than optimizing $\mTheta$ in the special case of anisotropic kernels, that is, when $\mTheta$ is a diagonal matrix, in the context of feature reduction, it might provide useful information on the relevance of such features. Indeed, we first recall that for the special choices of $\mSigma = \varepsilon \cdot \mI$ we get $\mTheta= \varepsilon^2 \mI $, which provides us the usual kernels depending on a shape parameter. Moreover, when $\mSigma = \mathrm{diag}(\varepsilon_{11}, \ldots, \varepsilon_{dd}):=\mathrm{diag}(\varepsilon_{1}, \ldots, \varepsilon_{d})$. We are then able to optimize the scaling in the $d$ directions by learning $\varepsilon_i$, $i=1,\ldots,d$. Finally, we could re-interpret such a result as a feature reduction scheme as, without loosing generalities we suppose that the optimized parameters are so that $\varepsilon_{1}^2 \geq \ldots \geq \varepsilon_{d}^2$. If there exists an index $p$, $1\leq p < d$ so that $\varepsilon_{1}^2 \geq \ldots \varepsilon_{p}^2 \gg \varepsilon_{p+1}^2 \geq \ldots \geq \varepsilon_{d}^2$, we can again set $\varepsilon_{p+1} = \ldots = \varepsilon_{d}=0$, which is equivalent to discarding the last $d-p$ features in the original dataset. 
\end{remark}

We conclude this section by pointing out that what we have studied for the interpolation setting can be trivially extended to both ridge regression and Kriging approximation.

\begin{remark}[Real/noisy data and connection to ridge regression] 
\label{remark_noise} In this section, we deliberately focused only on the interpolation setting, as  tools known as \emph{smooting splines, ridge regression} and \emph{Tikhonov regularization}, which are typically defined by solving \cite{Tikhonov,Wahba}
\begin{equation*}
    (\mK_{\mSigma} + \lambda \mI) \boldsymbol{c}^{\mSigma} = \boldsymbol{f}, 
\end{equation*}
instead of \eqref{eq:sys_sig}, where $\lambda \in \mathbb{R}_+$, are equivalent to interpolating  the \emph{smoothed} data  $\hat{\boldsymbol{f}}= (\mK_{\mSigma} + \lambda \mI)^{-1} \mK \boldsymbol{f}$.
\end{remark}

In the following section, we numerically show our claims, i.e., how the 2L-FUSE can effectively used as feature reduction scheme and we test its performances in regression tasks both with synthetic datasets and real-world samples. 

\section{Numerical experiments}
\label{numerics}

Our experiments aim to demonstrate the effectiveness of the 2L-FUSE strategy in compressing the original feature space while preserving, or even enhancing, regression performances. To this end, we compare the predictive accuracy achieved using the full set of original features against that obtained through 2L-FUSE-based dimensionality reduction scheme. In the latter case we address the problem for both diagonal matrices (see Remark \ref{diag}) and full matrices $\mSigma$ optimized accordingly with the  Mahalanobis induced metric. In the 2L-FUSE setting, we compute the eigenvalues of the learned shape matrix $\mTheta$ and retain only the linear combination of eigenvectors so that the associated eigenvalues are above a certain threshold. 

In all experiments the samples \(X \times F=\{(\boldsymbol{x}_i,f_i)\}_{i=1}^n \subset \Omega \times \mathbb{R}\)\} are randomly split into 80$\%$ training and 20$\%$ test sets. Then, the 2L-FUSE experiments are carried out accordingly with the following pipeline:
\begin{enumerate}
    \item For a given kernel \(\kappa\), we optimize the matrix \(\mSigma\) on the training data, for both the diagonal and non-diagonal cases. This is achieved thanks to the Python package freely available at \url{https://gitlab.mathematik.uni-stuttgart.de/pub/ians-anm/paper-2023-data-driven-kernel-designs};
    \item Reduce the features thanks to the eigenvalues analysis of $\mTheta$, i.e., construct the sets of mapped points $\tilde{X}=\{\tilde{x}_i\}_{i=1}^n$ in dimension $p$ (with $1\leq p \leq d$);
    \item Finally, kernel regressors are trained on both the original and sparse training sets. Performance is assessed on the test set of size \(n_t < n\) using standard  metrics. In particular, we compute the Root Mean Square Error (RMSE) or the Root Mean Square Relative Error (RMSRE)  between the predicted values \(\hat{F} = \{\hat{f}_i\}_{i=1}^{n_t}\) and the ground truth \({F} = \{{f}_i\}_{i=1}^{n_t}\):
\[
\text{RMSE}(F, \hat{F}) = \sqrt{\dfrac{1}{n_t} \sum_{i=1}^{n_t} (f_i - \hat{f}_i)^2}, \quad \text{RMSRE}(F,\hat{F}) = \sqrt{\dfrac{1}{n_t}\sum_{i=1}^{n_t}    \biggl(\dfrac{f_i-\hat{f}_i}{f_i} \biggl)^2}.
\]

\end{enumerate}

Results can be reproduced with the free Python implementation available at \url{https://github.com/fabianacamattari/2L-FUSE}

\subsection{Experiments with synthetic data}

We consider synthetic datasets generated by randomly sampling \(n\) points from the \(d\)-dimensional unit cube \(\Omega = [0,1]^d\). As a first illustrative example, we set \(d = 35\) and \(n = 50000\), and define the target function as a Gaussian-like expression depending only on the first six components:
$$ f_1(\boldsymbol{x}) = {\rm e}^{-\sum_{j=1}^6(x_j-0.5)^2}. $$
Clearly, this function represents a controlled toy model specifically designed to validate the feature reduction capabilities of the 2L-FUSE method.

The second scenario involves functions with more heterogeneous dependencies among the input features, whose contributions and relevance vary depending on the value of the parameter \(\alpha \in \{-2,-1,0,1,2\}\). In this case we set \(d=15\), \(n=5000\) and define
$$
f_{2}^\alpha(\boldsymbol{x}) = {\rm e}^{x_1^2} + {\rm e}^{x_2} + 3x_3 + \cos(x_4 x_5) + 4x_6^2 + 10^\alpha \sum_{j=7}^{8} \log(x_j + 2) + 10^{-8} \sum_{j=9}^{15} x_j .
$$
Since the values of $f_2^\alpha$ may span several orders of magnitude depending on the value $\alpha$, the RMSRE offers a scale-invariant metric for comparing model performances across different target ranges. 

In both cases, we test three radial kernels of different orders of smoothness, precisely the $C^{\infty}$ Gaussian (GA), the $C^2$ and $C^0$ Matèrn functions, denoted by M2 and M0, respectively.

For the first test function, the eigenvalues of the optimized matrix $\mTheta$ are plotted in Figures \ref{fig:1} and \ref{fig:2}, for the diagonal and non-diagonal case, respectively. In all plots, we set the eigenvalues below the machine precision equal to the latter. From those figures, we observe that, because of the typical ill-conditioning that characterizes the GA kernel, the decay of the associated eigenvalues is smoother than those of other kernels. Nevertheless, in the diagonal case with a threshold of  \(10^{-2}\), we can see that all the kernels identify the only features on which $f_1$ depends on (the first six variables).

As far as the non-diagonal case is concerned, from Figure  \ref{fig:2}, we note that only one linear combination of the original features, i.e., $\tilde{\boldsymbol{x}}_i = \left( \sum_{j=1}^d \sqrt{\lambda_1}v_1^j{{x}}_{i}^j\right)^{\intercal}$, $i=1,\ldots,n$, already explains the function $f_1$, meaning that the first (largest) eigenvalue is about four order of magnitude larger than all others. This claim is supported by Table \ref{tab:1}, where we compute the RMSE obtained by using all the features, the six variables identified by the optimization of the diagonal matrix $\mTheta$ and some linear combination of eigenvectors and original features properly scaled accordingly to the associated eigenvalues. Such Table confirms that, consistently with Figure \ref{fig:2}, one linear combination is sufficient to reconstruct the function $f_1$, indeed, with more features (the subsequent combinations) the RMSE saturates (see Table \ref{tab:1}).

\begin{figure}[htbp]
\centering
\begin{adjustbox}{width=\textwidth}
\begin{tikzpicture} 
\begin{groupplot}[
  group style={group size=1 by 4, vertical sep=-4.cm},
  width=\textwidth,
  height=0.4\textwidth,
  ymode=log,
  ymin=2.22e-16,
  ymax = 3.7791e+00,
  xmin=1,
  xmax=35,
  xtick=\empty,
  ylabel={Eigenvalues},
  grid=major,
legend style={at={(1,1)}, anchor=north east, column sep=1ex, font=\footnotesize}
]
\nextgroupplot[]
\addplot+[blue, mark=*, mark options={fill opacity=0.5, opacity=0.5}] coordinates {
(1, 8.0293e-01) (2, 7.9005e-01) (3, 7.8131e-01) (4, 7.7204e-01) (5, 7.6616e-01) (6, 7.2093e-01)
(7, 1.8452e-03) (8, 1.3568e-03) (9, 1.3279e-03) (10, 6.2991e-04) (11, 4.0493e-04) (12, 3.0055e-04)
(13, 2.3557e-04) (14, 1.4867e-04) (15, 1.4316e-04) (16, 1.2157e-04) (17, 6.5874e-05) (18, 2.8228e-05)
(19, 2.7183e-06) (20, 1.6129e-06) (21, 7.0089e-07) (22, 4.9622e-08) (23, 7.2795e-11) (24, 1.7071e-11)
(25, 2.9794e-13) (26, 2.2789e-14) (27, 2.2200e-16) (28, 2.2200e-16) (29, 2.2200e-16) (30, 2.2200e-16)
(31, 2.2200e-16) (32, 2.2200e-16) (33, 2.2200e-16) (34, 2.2200e-16) (35, 2.2200e-16)
};

\addplot+[red, mark=square*, mark options={fill opacity=0.5, opacity=0.5}] coordinates {
(1, 3.7791e+00) (2, 3.7507e+00) (3, 3.7322e+00) (4, 3.7110e+00) (5, 3.6857e+00) (6, 3.6030e+00)
(7, 2.2200e-16) (8, 2.2200e-16) (9, 2.2200e-16) (10, 2.2200e-16) (11, 2.2200e-16) (12, 2.2200e-16)
(13, 2.2200e-16) (14, 2.2200e-16) (15, 2.2200e-16) (16, 2.2200e-16) (17, 2.2200e-16) (18, 2.2200e-16)
(19, 2.2200e-16) (20, 2.2200e-16) (21, 2.2200e-16) (22, 2.2200e-16) (23, 2.2200e-16) (24, 2.2200e-16)
(25, 2.2200e-16) (26, 2.2200e-16) (27, 2.2200e-16) (28, 2.2200e-16) (29, 2.2200e-16) (30, 2.2200e-16)
(31, 2.2200e-16) (32, 2.2200e-16) (33, 2.2200e-16) (34, 2.2200e-16) (35, 2.2200e-16)
};


\addplot+[green, mark=triangle*, mark options={fill opacity=0.5, opacity=0.9}] coordinates {
(1, 6.7644e-01) (2, 6.7569e-01) (3, 6.6623e-01) (4, 6.4083e-01) (5, 6.3559e-01) (6, 6.2761e-01)
(7, 2.2200e-16) (8, 2.2200e-16) (9, 2.2200e-16) (10, 2.2200e-16) (11, 2.2200e-16) (12, 2.2200e-16)
(13, 2.2200e-16) (14, 2.2200e-16) (15, 2.2200e-16) (16, 2.2200e-16) (17, 2.2200e-16) (18, 2.2200e-16)
(19, 2.2200e-16) (20, 2.2200e-16) (21, 2.2200e-16) (22, 2.2200e-16) (23, 2.2200e-16) (24, 2.2200e-16)
(25, 2.2200e-16) (26, 2.2200e-16) (27, 2.2200e-16) (28, 2.2200e-16) (29, 2.2200e-16) (30, 2.2200e-16)
(31, 2.2200e-16) (32, 2.2200e-16) (33, 2.2200e-16) (34, 2.2200e-16) (35, 2.2200e-16)
};

\nextgroupplot[
            axis y line=none,
            axis x line*=bottom,
            grid=none,
            xtick={1,...,35},
            xticklabels={2,5,6,1,4,3,34,13,30,18,21,26,11,31,19,29,35,16,9,7,27,23,28,12,25,15,33,14,32,10,17,8,24,20,22},
            tick label style={font=\footnotesize, rotate=45, text=blue!80!black}, 
            xtick distance = 0.029411765
]

\nextgroupplot[
            axis y line=none,
            axis x line*=bottom,
            grid=none,
            xtick={1,...,35},
            xticklabels={2,5,6,4,1,3,7,8,9,10,11,12,13,14,15,16,17,18,19,20,21,22,23,24,25,26,27,28,29,30,31,32,33,34,35},
            tick label style={font=\footnotesize, rotate=45, text=red!80!black}, 
            xtick distance = 0.029411765
]
\nextgroupplot[
            axis y line=none,
            axis x line*=bottom,
            grid=none,
            xtick={1,...,35},
            xlabel={Feature Labels},
            xticklabels={2,5,4,3,1,6,34,22,11,7,8,9,10,12,13,14,15,16,17,18,19,20,21,23,24,25,26,27,28,29,30,31,32,33,35},
            tick label style={font=\footnotesize, rotate=45, text=green!80!black}, 
            xtick distance = 0.029411765
]
\end{groupplot}
\end{tikzpicture}
\end{adjustbox}
\caption{Eigenvalues of the diagonal matrix $\mTheta$ for the function $f_1$. The colors refer to the kernels GA (blue), M2 (red) and M0 (green).}
\label{fig:1}
\end{figure}
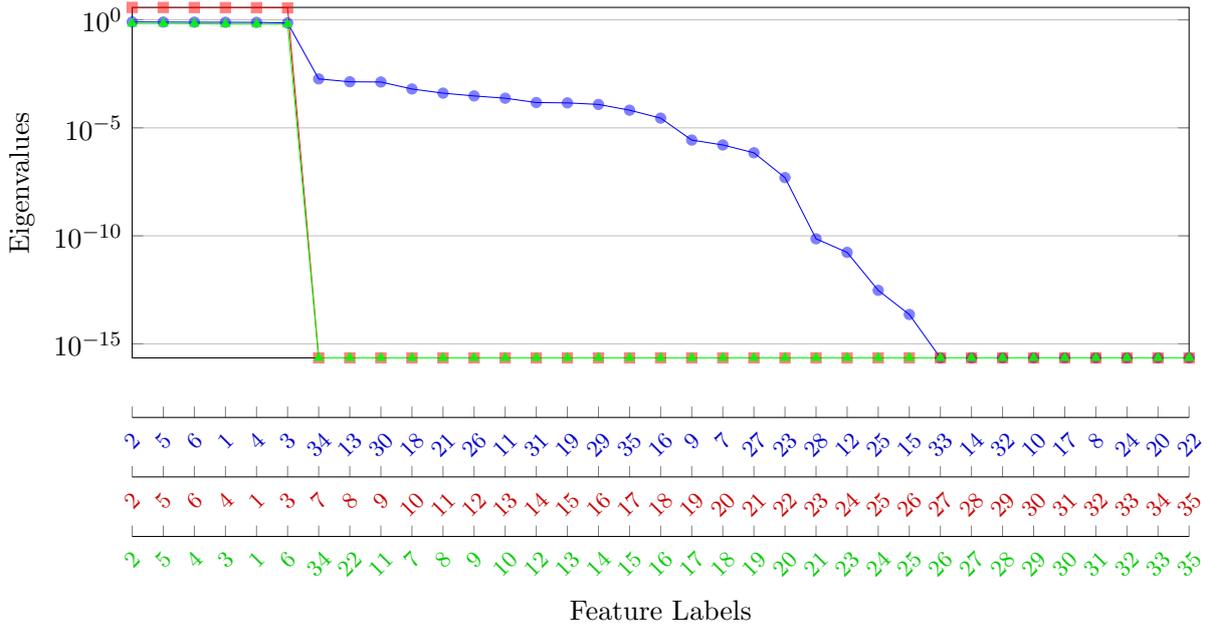

\begin{figure}[htbp]
\centering
\begin{adjustbox}{width=\textwidth}
\begin{tikzpicture} 
\begin{groupplot}[
  group style={group size=1 by 1, vertical sep=-2.4cm},
  width=\textwidth,
  height=0.4\textwidth,
  ymode=log,
  ymin=2.22e-16,
  ymax = 2.6898e+01,
  xmin=1,
  xmax=35,
  xtick={1,...,35},
  ylabel={Eigenvalues},
  xlabel = {Number of linear combinations},
  xticklabel style={font=\footnotesize,rotate=45},
  grid=major,
legend style={at={(1,1)}, anchor=north east, column sep=1ex, font=\footnotesize}
]
\nextgroupplot[]
\addplot+[blue, mark=*, mark options={fill opacity=0.5, opacity=0.5}]  coordinates {
(1, 7.6596e+00) (2, 6.3804e-03) (3, 4.7472e-03) (4, 2.8871e-03) 
(5, 2.4424e-03) (6, 1.6058e-03) (7, 1.4483e-03) (8, 1.2801e-03) 
(9, 1.1134e-03) (10, 8.8887e-04) (11, 7.4704e-04) (12, 6.7430e-04)
(13, 5.9183e-04) (14, 5.0779e-04) (15, 4.2258e-04) (16, 3.8265e-04) 
(17, 3.6616e-04) (18, 2.9044e-04) (19, 2.1940e-04) (20, 1.9491e-04) 
(21, 1.7368e-04) (22, 9.9119e-05) (23, 9.4916e-05) (24, 4.4003e-05)
(25, 3.0968e-05) (26, 2.2809e-05) (27, 9.6585e-06) (28, 5.8551e-06) 
(29, 4.1504e-06) (30, 1.3592e-06) (31, 7.6794e-07) (32, 3.0703e-07) 
(33, 1.7598e-07) (34, 3.9963e-08) (35, 1.1989e-08)
};

\addplot+[red, mark=square*, mark options={fill opacity=0.5, opacity=0.5}]  coordinates {
(1, 2.6898e+01) (2, 1.2088e-02) (3, 8.9282e-03) (4, 8.0925e-03)
(5, 6.3336e-03) (6, 5.3317e-03) (7, 4.8612e-03) (8, 4.3142e-03) 
(9, 3.6529e-03) (10, 3.0319e-03) (11, 2.4367e-03) (12, 2.0973e-03)
(13, 1.9136e-03) (14, 1.7785e-03) (15, 1.4660e-03) (16, 1.2748e-03) 
(17, 9.5476e-04) (18, 7.2139e-04) (19, 6.3755e-04) (20, 4.7698e-04) 
(21, 3.9584e-04) (22, 3.1330e-04) (23, 2.7169e-04) (24, 1.8673e-04)
(25, 9.8223e-05) (26, 4.6212e-05) (27, 2.5558e-05) (28, 1.8736e-05) 
(29, 9.0123e-06) (30, 5.2948e-06) (31, 2.1487e-06) (32, 1.2527e-06) 
(33, 4.9160e-07) (34, 1.8877e-07) (35, 7.5832e-09)
};


\addplot+[green, mark=triangle*, mark options={fill opacity=0.5, opacity=0.9}]  coordinates {
(1, 1.5711e+01) (2, 2.2911e-04) (3, 1.6549e-04) (4, 8.4770e-05) 
(5, 5.0174e-05) (6, 3.6338e-05) (7, 1.4717e-05) (8, 7.1849e-06) 
(9, 3.0280e-06) (10, 2.3096e-06) (11, 1.4064e-06) (12, 1.1713e-06)
(13, 9.4698e-07) (14, 5.7978e-07) (15, 5.0025e-07) (16, 3.9062e-07) 
(17, 2.3200e-07) (18, 1.0228e-07) (19, 7.3062e-08) (20, 4.9262e-08) 
(21, 3.8357e-08) (22, 2.6606e-08) (23, 1.4333e-08) (24, 8.2209e-09)
(25, 3.4588e-09) (26, 2.5439e-09) (27, 1.8914e-09) (28, 1.4223e-09) 
(29, 6.4247e-10) (30, 4.8569e-10) (31, 2.1623e-10) (32, 3.8132e-11) 
(33, 2.2200e-16) (34, 2.2200e-16) (35, 2.2200e-16)
};
\end{groupplot}
\end{tikzpicture}
\end{adjustbox}
\caption{Eigenvalues of the full matrix $\mTheta$ for the function $f_1$. The colors refer to the kernels GA (blue), M2 (red) and M0 (green).} 
\label{fig:2}
\end{figure}

\begin{table}[h!]
\centering
\small
\renewcommand{\arraystretch}{1.2}
\begin{tabular}{c|c|c|c|}
\multicolumn{1}{c}{} & \multicolumn{1}{c}{{GA}} & \multicolumn{1}{c}{{M2}} & \multicolumn{1}{c}{{M0}}\\
\cline{2-4}
\({\boldsymbol{x}_i}=({x}_i^1,\ldots, {x}_i^d)\) 
& 2.82e-01  
& 2.29e-01
& 2.35e-01 \\
\cline{2-4}
\({\boldsymbol{x}_i}=(\varepsilon_1{x}_i^1,\ldots, \varepsilon_6{x}_i^6)\) 
& 1.28e-02 
& 9.90e-03 
& 2.68e-02 \\
\cline{2-4}
\(\tilde{\boldsymbol{x}}_i =\sqrt{\lambda_1}\boldsymbol{v}_1\boldsymbol{x}_i^\intercal\)
& 2.28e-03
& 1.17e-03
& 1.45e-03 \\
\cline{2-4}

\noalign{\vskip-6pt}
\multicolumn{1}{c}{}
& \multicolumn{1}{c}{} 
& \multicolumn{1}{c}{} 
& \multicolumn{1}{c}{} \\
\cline{2-4}
\(\tilde{\boldsymbol{x}}_i = \left( \sqrt{\lambda_1}\boldsymbol{v}_1\boldsymbol{x}_i^\intercal,\sqrt{\lambda_2}\boldsymbol{v}_2\boldsymbol{x}_i^\intercal\right)\)
& 1.79e-03 
& 1.18e-03  
& 1.48e-03 
\\
\(\tilde{\boldsymbol{x}}_i = \left( \sqrt{\lambda_1}\boldsymbol{v}_1\boldsymbol{x}_i^\intercal,\sqrt{\lambda_2}\boldsymbol{v}_2\boldsymbol{x}_i^\intercal,\sqrt{\lambda_3}\boldsymbol{v}_3\boldsymbol{x}_i^\intercal\right)\)
& 1.79e-03  
& 1.19e-03 
& 1.49e-03 
\\
\multicolumn{1}{c}{\textcolor{black!30}{\(\vdots\)}}
& \multicolumn{1}{|c}{\textcolor{black!30}{\(\vdots\)}} 
& \multicolumn{1}{|c}{\textcolor{black!30}{\(\vdots\)}} 
& \multicolumn{1}{|c|}{\textcolor{black!30}{\(\vdots\)}} \\
\(\tilde{\boldsymbol{x}}_i = \left( \sqrt{\lambda_1}\boldsymbol{v}_1\boldsymbol{x}_i^\intercal,\ldots,\sqrt{\lambda_{10}}\boldsymbol{v}_{10}\boldsymbol{x}_i^\intercal\right)\)
& 1.79e-03 
& 1.30e-03 
& 1.41e-03 
\\
\cline{2-4}
\end{tabular}
\caption{RMSEs for the function 
\(f_1\) using different kernels and feature configurations: all available features, those selected through the optimization of a diagonal matrix \(\mTheta\), i.e., the first six, and finally those obtained through feature combinations, in the most general case including from one up to ten new linear combinations.}
\label{tab:1}
\end{table}

Similar results can be recovered for the function $f_2^\alpha$. Precisely, we first note that the feature importance in this case depends on the values of the parameter $\alpha$. Our 2L-FUSE method correctly interprets this behavior, see Figure \ref{fig:3}, where we plot the eigenvalues of $\mTheta$ in the diagonal case. We can clearly see that, as the parameter $\alpha$ grows, the features $x_7$ and $x_8$ became more and more relevant, for all the considered kernels. Similarly to the previous case, the results on the eigenvalues for $f_2^\alpha$ when the full matrix $\mTheta$ is optimized are reported in Figure \ref{fig:4}. We observe that only a few eigenvalues are significantly larger than the others (in some cases differing up to five orders of magnitude). This suggests that a small number of linear combinations, from one to about five in most cases, might be sufficient to represent the target. As confirmations, with a larger number of combinations, the scores do not improve further, they saturate. This is indeed confirmed by the results on the approximation of \(f_2^\alpha\), that are plotted in Figure \ref{fig:5}, where we  compare the performance obtained by using the original dataset (i.e., all available features) with those recovered by the 2L-FUSE method, for both the diagonal and non-diagonal case. In all scenarios, the highest performance is achieved using the selected subset of features. In the non-diagonal case, we observe that the errors decrease and saturate after considering only a few dimensions, stressing the fact that the proposed scheme can be efficiently used as a feature reduction scheme.

\begin{figure}[htbp]
\centering
\begin{adjustbox}{width=\textwidth}
\begin{tikzpicture} 
\begin{groupplot}[
  group style={group size=1 by 5, vertical sep=1.5cm 
  },
  width=\textwidth,
  height=0.25\textwidth,
  ymode=log,
  ymin=2.22e-16,
  ymax = 3.7791e+00,
  xmin=1,
  xmax=15,
  xtick = {1,...,15},
  xticklabel style={font=\footnotesize,rotate=45},
  yticklabel style={font=\footnotesize},
  grid=major,
  legend style={at={(1,1)}, 
  anchor=north east, 
  column sep=1ex, 
  font=\footnotesize}
]

\nextgroupplot[xticklabels={
    {\shortstack{\textcolor{blue}{6} \\ \textcolor{red}{6} \\ \textcolor{green}{6}}},
    {\shortstack{\textcolor{blue}{1} \\ \textcolor{red}{1} \\ \textcolor{green}{3}}},
    {\shortstack{\textcolor{blue}{2} \\ \textcolor{red}{2} \\ \textcolor{green}{1}}},
    {\shortstack{\textcolor{blue}{3} \\ \textcolor{red}{3} \\ \textcolor{green}{2}}},
    {\shortstack{\textcolor{blue}{5} \\ \textcolor{red}{5} \\ \textcolor{green}{10}}},
    {\shortstack{\textcolor{blue}{4} \\ \textcolor{red}{4} \\ \textcolor{green}{4}}},
    {\shortstack{\textcolor{blue}{10} \\ \textcolor{red}{11} \\ \textcolor{green}{5}}},
    {\shortstack{\textcolor{blue}{7} \\ \textcolor{red}{7} \\ \textcolor{green}{7}}},
    {\shortstack{\textcolor{blue}{8} \\ \textcolor{red}{9} \\ \textcolor{green}{15}}},
    {\shortstack{\textcolor{blue}{9} \\ \textcolor{red}{8} \\ \textcolor{green}{9}}},
    {\shortstack{\textcolor{blue}{11} \\ \textcolor{red}{10} \\ \textcolor{green}{13}}},
    {\shortstack{\textcolor{blue}{12} \\ \textcolor{red}{12} \\ \textcolor{green}{8}}},
    {\shortstack{\textcolor{blue}{13} \\ \textcolor{red}{13} \\ \textcolor{green}{11}}},
    {\shortstack{\textcolor{blue}{14} \\ \textcolor{red}{14} \\ \textcolor{green}{12}}},
    {\shortstack{\textcolor{blue}{15} \\ \textcolor{red}{15} \\ \textcolor{green}{14}}}
},ymax = 7.1366e-01]
\addplot+[blue, mark=*, mark options={fill opacity=0.5, opacity=0.5}]  coordinates {
(1, 3.0137e-01)
(2, 2.2970e-01)
(3, 1.2694e-01)
(4, 2.0624e-02)
(5, 8.6760e-04)
(6, 7.7321e-04)
(7, 9.0020e-13)
(8, 2.2200e-16)
(9, 2.2200e-16)
(10, 2.2200e-16)
(11, 2.2200e-16)
(12, 2.2200e-16)
(13, 2.2200e-16)
(14, 2.2200e-16)
(15, 2.2200e-16)
};

\addplot+[red, mark=square*, mark options={fill opacity=0.5, opacity=0.5}] coordinates {
(1, 7.1366e-01)
(2, 5.2669e-01)
(3, 2.2962e-01)
(4, 5.8337e-02)
(5, 3.1184e-03)
(6, 2.5020e-03)
(7, 2.7609e-11)
(8, 9.1030e-12)
(9, 5.4704e-14)
(10, 2.2200e-16)
(11, 2.2200e-16)
(12, 2.2200e-16)
(13, 2.2200e-16)
(14, 2.2200e-16)
(15, 2.2200e-16)
};

\addplot+[green, mark=triangle*, mark options={fill opacity=0.5, opacity=0.9}] coordinates {
(1, 4.4236e-03)
(2, 1.6793e-03)
(3, 1.1508e-03)
(4, 8.9422e-04)
(5, 1.5735e-11)
(6, 1.3306e-15)
(7, 2.2200e-16)
(8, 2.2200e-16)
(9, 2.2200e-16)
(10, 2.2200e-16)
(11, 2.2200e-16)
(12, 2.2200e-16)
(13, 2.2200e-16)
(14, 2.2200e-16)
(15, 2.2200e-16)
};

\nextgroupplot[xticklabels={
    {\shortstack{\textcolor{blue}{6} \\ \textcolor{red}{6} \\ \textcolor{green}{6}}},
    {\shortstack{\textcolor{blue}{1} \\ \textcolor{red}{1} \\ \textcolor{green}{3}}},
    {\shortstack{\textcolor{blue}{3} \\ \textcolor{red}{2} \\ \textcolor{green}{1}}},
    {\shortstack{\textcolor{blue}{2} \\ \textcolor{red}{3} \\ \textcolor{green}{2}}},
    {\shortstack{\textcolor{blue}{8} \\ \textcolor{red}{7} \\ \textcolor{green}{7}}},
    {\shortstack{\textcolor{blue}{7} \\ \textcolor{red}{8} \\ \textcolor{green}{8}}},
    {\shortstack{\textcolor{blue}{4} \\ \textcolor{red}{5} \\ \textcolor{green}{9}}},
    {\shortstack{\textcolor{blue}{5} \\ \textcolor{red}{4} \\ \textcolor{green}{14}}},
    {\shortstack{\textcolor{blue}{9} \\ \textcolor{red}{9} \\ \textcolor{green}{15}}},
    {\shortstack{\textcolor{blue}{13} \\ \textcolor{red}{10} \\ \textcolor{green}{5}}},
    {\shortstack{\textcolor{blue}{12} \\ \textcolor{red}{11} \\ \textcolor{green}{4}}},
    {\shortstack{\textcolor{blue}{10} \\ \textcolor{red}{12} \\ \textcolor{green}{10}}},
    {\shortstack{\textcolor{blue}{11} \\ \textcolor{red}{13} \\ \textcolor{green}{13}}},
    {\shortstack{\textcolor{blue}{14} \\ \textcolor{red}{14} \\ \textcolor{green}{11}}},
    {\shortstack{\textcolor{blue}{15} \\ \textcolor{red}{15} \\ \textcolor{green}{12}}}
}, 
ymax = 7.2e-01, 
]

\addplot+[blue, mark=*, mark options={fill opacity=0.5, opacity=0.5}] coordinates {
(1, 2.8268e-01)
(2, 2.1179e-01)
(3, 1.8924e-02)
(4, 9.6326e-03)
(5, 2.5399e-03)
(6, 2.1557e-03)
(7, 8.1345e-04)
(8, 7.4838e-04)
(9, 5.4591e-06)
(10, 4.4268e-09)
(11, 2.2200e-16)
(12, 2.2200e-16)
(13, 2.2200e-16)
(14, 2.2200e-16)
(15, 2.2200e-16)
};

\addplot+[red, mark=square*, mark options={fill opacity=0.5, opacity=0.5}] coordinates {
(1, 7.1313e-01)
(2, 5.1204e-01)
(3, 2.2047e-01)
(4, 5.8063e-02)
(5, 8.3032e-03)
(6, 8.2491e-03)
(7, 3.1930e-03)
(8, 2.4010e-03)
(9, 2.2200e-16)
(10, 2.2200e-16)
(11, 2.2200e-16)
(12, 2.2200e-16)
(13, 2.2200e-16)
(14, 2.2200e-16)
(15, 2.2200e-16)
};

\addplot+[green, mark=triangle*, mark options={fill opacity=0.5, opacity=0.9}] coordinates {
(1, 4.4072e-03)
(2, 1.7460e-03)
(3, 1.1472e-03)
(4, 9.3163e-04)
(5, 7.8759e-05)
(6, 6.0805e-05)
(7, 1.3990e-14)
(8, 2.3162e-16)
(9, 2.2200e-16)
(10, 2.2200e-16)
(11, 2.2200e-16)
(12, 2.2200e-16)
(13, 2.2200e-16)
(14, 2.2200e-16)
(15, 2.2200e-16)
};

\nextgroupplot[xticklabels={
    {\shortstack{\textcolor{blue}{6} \\ \textcolor{red}{6} \\ \textcolor{green}{6}}},
    {\shortstack{\textcolor{blue}{8} \\ \textcolor{red}{8} \\ \textcolor{green}{8}}},
    {\shortstack{\textcolor{blue}{7} \\ \textcolor{red}{7} \\ \textcolor{green}{7}}},
    {\shortstack{\textcolor{blue}{3} \\ \textcolor{red}{1} \\ \textcolor{green}{3}}},
    {\shortstack{\textcolor{blue}{2} \\ \textcolor{red}{3} \\ \textcolor{green}{2}}},
    {\shortstack{\textcolor{blue}{1} \\ \textcolor{red}{2} \\ \textcolor{green}{1}}},
    {\shortstack{\textcolor{blue}{5} \\ \textcolor{red}{5} \\ \textcolor{green}{15}}},
    {\shortstack{\textcolor{blue}{4} \\ \textcolor{red}{4} \\ \textcolor{green}{14}}},
    {\shortstack{\textcolor{blue}{9} \\ \textcolor{red}{9} \\ \textcolor{green}{5}}},
    {\shortstack{\textcolor{blue}{10} \\ \textcolor{red}{10} \\ \textcolor{green}{13}}},
    {\shortstack{\textcolor{blue}{11} \\ \textcolor{red}{11} \\ \textcolor{green}{10}}},
    {\shortstack{\textcolor{blue}{12} \\ \textcolor{red}{12} \\ \textcolor{green}{9}}},
    {\shortstack{\textcolor{blue}{13} \\ \textcolor{red}{13} \\ \textcolor{green}{4}}},
    {\shortstack{\textcolor{blue}{14} \\ \textcolor{red}{14} \\ \textcolor{green}{11}}},
    {\shortstack{\textcolor{blue}{15} \\ \textcolor{red}{15} \\ \textcolor{green}{12}}}
},
ymax = 5.9342e-01,
ylabel={Eigenvalues}
]

\addplot+[blue, mark=*, mark options={fill opacity=0.5, opacity=0.5}]  coordinates {
(1, 2.2650e-01)
(2, 1.4951e-01)
(3, 1.3756e-01)
(4, 1.0854e-02)
(5, 6.7745e-03)
(6, 5.7656e-03)
(7, 1.5548e-04)
(8, 1.4839e-04)
(9, 2.2200e-16)
(10, 2.2200e-16)
(11, 2.2200e-16)
(12, 2.2200e-16)
(13, 2.2200e-16)
(14, 2.2200e-16)
(15, 2.2200e-16)
};

\addplot+[red, mark=square*, mark options={fill opacity=0.5, opacity=0.5}] coordinates {
(1, 5.9342e-01)
(2, 3.9750e-01)
(3, 3.9358e-01)
(4, 3.8255e-01)
(5, 4.5654e-02)
(6, 4.3240e-02)
(7, 1.2726e-03)
(8, 1.1537e-03)
(9, 7.5827e-09)
(10, 2.2200e-16)
(11, 2.2200e-16)
(12, 2.2200e-16)
(13, 2.2200e-16)
(14, 2.2200e-16)
(15, 2.2200e-16)
};

\addplot+[green, mark=triangle*, mark options={fill opacity=0.5, opacity=0.9}] coordinates {
(1, 2.7209e-03)
(2, 1.6746e-03)
(3, 1.5031e-03)
(4, 1.2937e-03)
(5, 7.0952e-04)
(6, 6.2095e-04)
(7, 9.6558e-13)
(8, 2.2200e-16)
(9, 2.2200e-16)
(10, 2.2200e-16)
(11, 2.2200e-16)
(12, 2.2200e-16)
(13, 2.2200e-16)
(14, 2.2200e-16)
(15, 2.2200e-16)
};

\nextgroupplot[xticklabels={
    {\shortstack{\textcolor{blue}{7} \\ \textcolor{red}{7} \\ \textcolor{green}{8}}},
    {\shortstack{\textcolor{blue}{8} \\ \textcolor{red}{8} \\ \textcolor{green}{7}}},
    {\shortstack{\textcolor{blue}{6} \\ \textcolor{red}{6} \\ \textcolor{green}{6}}},
    {\shortstack{\textcolor{blue}{3} \\ \textcolor{red}{3} \\ \textcolor{green}{3}}},
    {\shortstack{\textcolor{blue}{2} \\ \textcolor{red}{2} \\ \textcolor{green}{2}}},
    {\shortstack{\textcolor{blue}{1} \\ \textcolor{red}{1} \\ \textcolor{green}{1}}},
    {\shortstack{\textcolor{blue}{15} \\ \textcolor{red}{15} \\ \textcolor{green}{12}}},
    {\shortstack{\textcolor{blue}{13} \\ \textcolor{red}{4} \\ \textcolor{green}{13}}},
    {\shortstack{\textcolor{blue}{4} \\ \textcolor{red}{5} \\ \textcolor{green}{9}}},
    {\shortstack{\textcolor{blue}{5} \\ \textcolor{red}{9} \\ \textcolor{green}{10}}},
    {\shortstack{\textcolor{blue}{9} \\ \textcolor{red}{10} \\ \textcolor{green}{4}}},
    {\shortstack{\textcolor{blue}{10} \\ \textcolor{red}{11} \\ \textcolor{green}{5}}},
    {\shortstack{\textcolor{blue}{11} \\ \textcolor{red}{12} \\ \textcolor{green}{11}}},
    {\shortstack{\textcolor{blue}{12} \\ \textcolor{red}{13} \\ \textcolor{green}{14}}},
    {\shortstack{\textcolor{blue}{14} \\ \textcolor{red}{14} \\ \textcolor{green}{15}}}
},
ymax = 1.1180e+00]

\addplot+[blue, mark=*, mark options={fill opacity=0.5, opacity=0.5}] coordinates {
    (1, 5.6860e-01)
    (2, 5.6530e-01)
    (3, 3.9906e-03)
    (4, 3.3142e-03)
    (5, 1.8539e-03)
    (6, 1.4762e-03)
    (7, 1.1396e-13)
    (8, 2.2200e-16)
    (9, 2.2200e-16)
    (10, 2.2200e-16)
    (11, 2.2200e-16)
    (12, 2.2200e-16)
    (13, 2.2200e-16)
    (14, 2.2200e-16)
    (15, 2.2200e-16)
};

\addplot+[red, mark=square*, mark options={fill opacity=0.5, opacity=0.5}] coordinates {
    (1, 1.1180e+00)
    (2, 1.1129e+00)
    (3, 1.7784e-01)
    (4, 1.1593e-02)
    (5, 6.6889e-03)
    (6, 6.2503e-03)
    (7, 2.2200e-16)
    (8, 2.2200e-16)
    (9, 2.2200e-16)
    (10, 2.2200e-16)
    (11, 2.2200e-16)
    (12, 2.2200e-16)
    (13, 2.2200e-16)
    (14, 2.2200e-16)
    (15, 2.2200e-16)
};

\addplot+[green, mark=triangle*, mark options={fill opacity=0.5, opacity=0.9}] coordinates {
    (1, 6.6108e-03)
    (2, 5.8323e-03)
    (3, 4.1117e-04)
    (4, 2.2279e-04)
    (5, 6.9026e-05)
    (6, 9.5562e-06)
    (7, 9.6758e-07)
    (8, 1.6252e-10)
    (9, 2.2200e-16)
    (10, 2.2200e-16)
    (11, 2.2200e-16)
    (12, 2.2200e-16)
    (13, 2.2200e-16)
    (14, 2.2200e-16)
    (15, 2.2200e-16)
};

\nextgroupplot[xticklabels=  {    
        {\shortstack{\textcolor{blue}{7} \\ \textcolor{red}{7} \\ \textcolor{green}{8}}},
        {\shortstack{\textcolor{blue}{8} \\ \textcolor{red}{8} \\ \textcolor{green}{7}}},
        {\shortstack{\textcolor{blue}{6} \\ \textcolor{red}{6} \\ \textcolor{green}{3}}},
        {\shortstack{\textcolor{blue}{3} \\ \textcolor{red}{3} \\ \textcolor{green}{5}}},
        {\shortstack{\textcolor{blue}{11} \\ \textcolor{red}{2} \\ \textcolor{green}{2}}},
        {\shortstack{\textcolor{blue}{13} \\ \textcolor{red}{1} \\ \textcolor{green}{11}}},
        {\shortstack{\textcolor{blue}{1} \\ \textcolor{red}{4} \\ \textcolor{green}{10}}},
        {\shortstack{\textcolor{blue}{2} \\ \textcolor{red}{5} \\ \textcolor{green}{6}}},
        {\shortstack{\textcolor{blue}{4} \\ \textcolor{red}{9} \\ \textcolor{green}{4}}},
        {\shortstack{\textcolor{blue}{5} \\ \textcolor{red}{10} \\ \textcolor{green}{1}}},
        {\shortstack{\textcolor{blue}{9} \\ \textcolor{red}{11} \\ \textcolor{green}{9}}},
        {\shortstack{\textcolor{blue}{10} \\ \textcolor{red}{12} \\ \textcolor{green}{14}}},
        {\shortstack{\textcolor{blue}{12} \\ \textcolor{red}{13} \\ \textcolor{green}{12}}},
        {\shortstack{\textcolor{blue}{14} \\ \textcolor{red}{14} \\ \textcolor{green}{13}}},
        {\shortstack{\textcolor{blue}{15} \\ \textcolor{red}{15} \\ \textcolor{green}{15}}}
        },
        ymax = 1.5084e+00, 
        xlabel = Feature Labels]

\addplot+[blue, mark=*, mark options={fill opacity=0.5, opacity=0.5}] coordinates {
    (1, 7.7397e-01)
    (2, 7.6287e-01)
    (3, 3.7053e-04)
    (4, 1.0294e-04)
    (5, 2.2200e-16)
    (6, 2.2200e-16)
    (7, 2.2200e-16)
    (8, 2.2200e-16)
    (9, 2.2200e-16)
    (10, 2.2200e-16)
    (11, 2.2200e-16)
    (12, 2.2200e-16)
    (13, 2.2200e-16)
    (14, 2.2200e-16)
    (15, 2.2200e-16)
};

\addplot+[red, mark=square*, mark options={fill opacity=0.5, opacity=0.5}] coordinates {
    (1, 1.5084e+00)
    (2, 1.4725e+00)
    (3, 1.4723e-03)
    (4, 7.9507e-04)
    (5, 3.0534e-15)
    (6, 2.2200e-16)
    (7, 2.2200e-16)
    (8, 2.2200e-16)
    (9, 2.2200e-16)
    (10, 2.2200e-16)
    (11, 2.2200e-16)
    (12, 2.2200e-16)
    (13, 2.2200e-16)
    (14, 2.2200e-16)
    (15, 2.2200e-16)
};

\addplot+[green, mark=triangle*, mark options={fill opacity=0.5, opacity=0.9}] coordinates {
    (1, 6.9349e-03)
    (2, 5.9765e-03)
    (3, 3.1971e-08)
    (4, 2.2200e-16)
    (5, 2.2200e-16)
    (6, 2.2200e-16)
    (7, 2.2200e-16)
    (8, 2.2200e-16)
    (9, 2.2200e-16)
    (10, 2.2200e-16)
    (11, 2.2200e-16)
    (12, 2.2200e-16)
    (13, 2.2200e-16)
    (14, 2.2200e-16)
    (15, 2.2200e-16)};

\end{groupplot}
\end{tikzpicture}
\end{adjustbox}
\caption{Eigenvalues of the diagonal matrix $\mTheta$ for the test function $f_2^\alpha$ with different kernels and values of \(\alpha\). In each graph the colors refer to the kernels GA (blue), M2 (red) and M0 (green). From top to bottom: $\alpha = -2$, $\alpha = -1$, $\alpha = 0$, $\alpha = 1$ and $\alpha = 2$.}
\label{fig:3}
\end{figure}
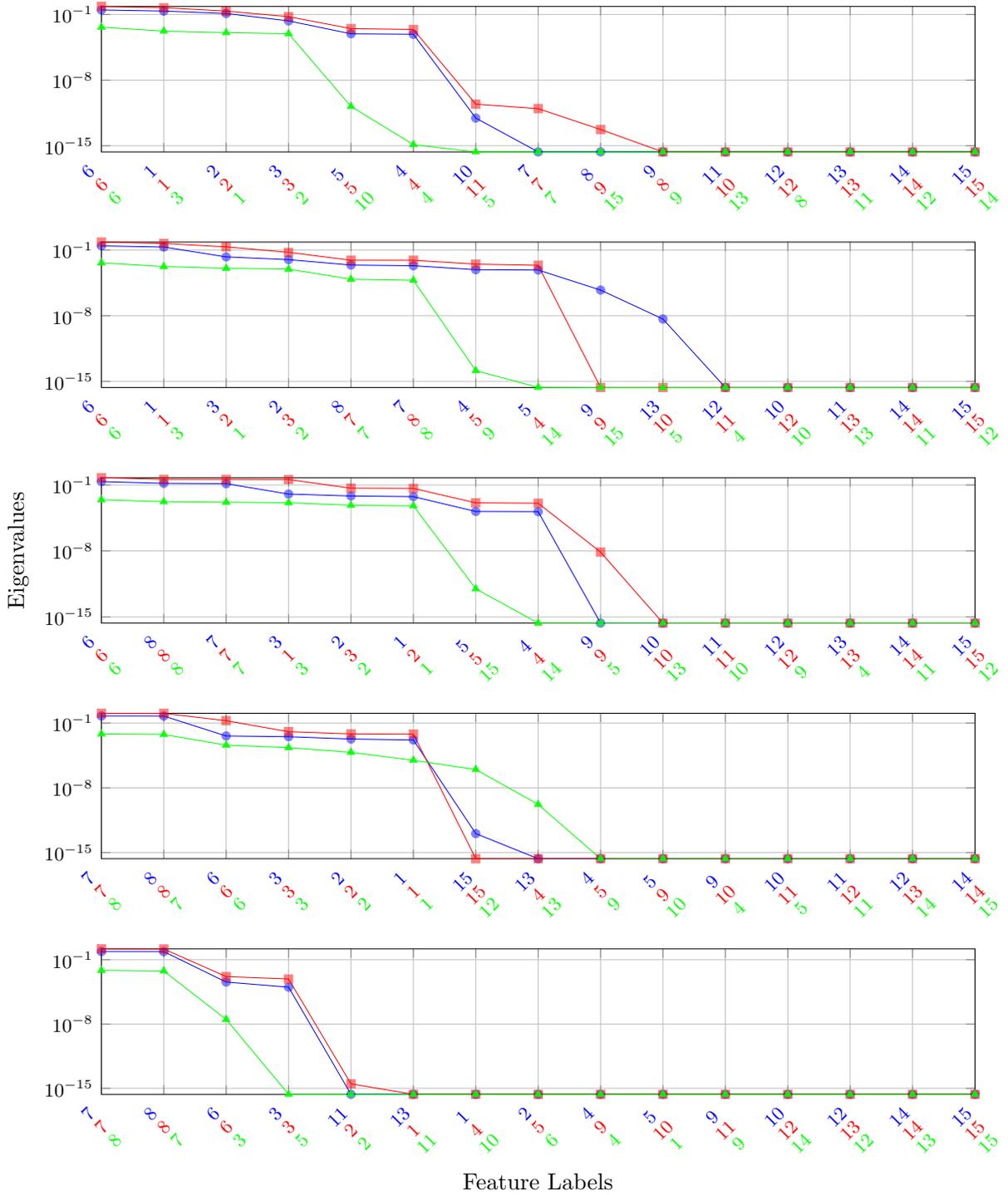

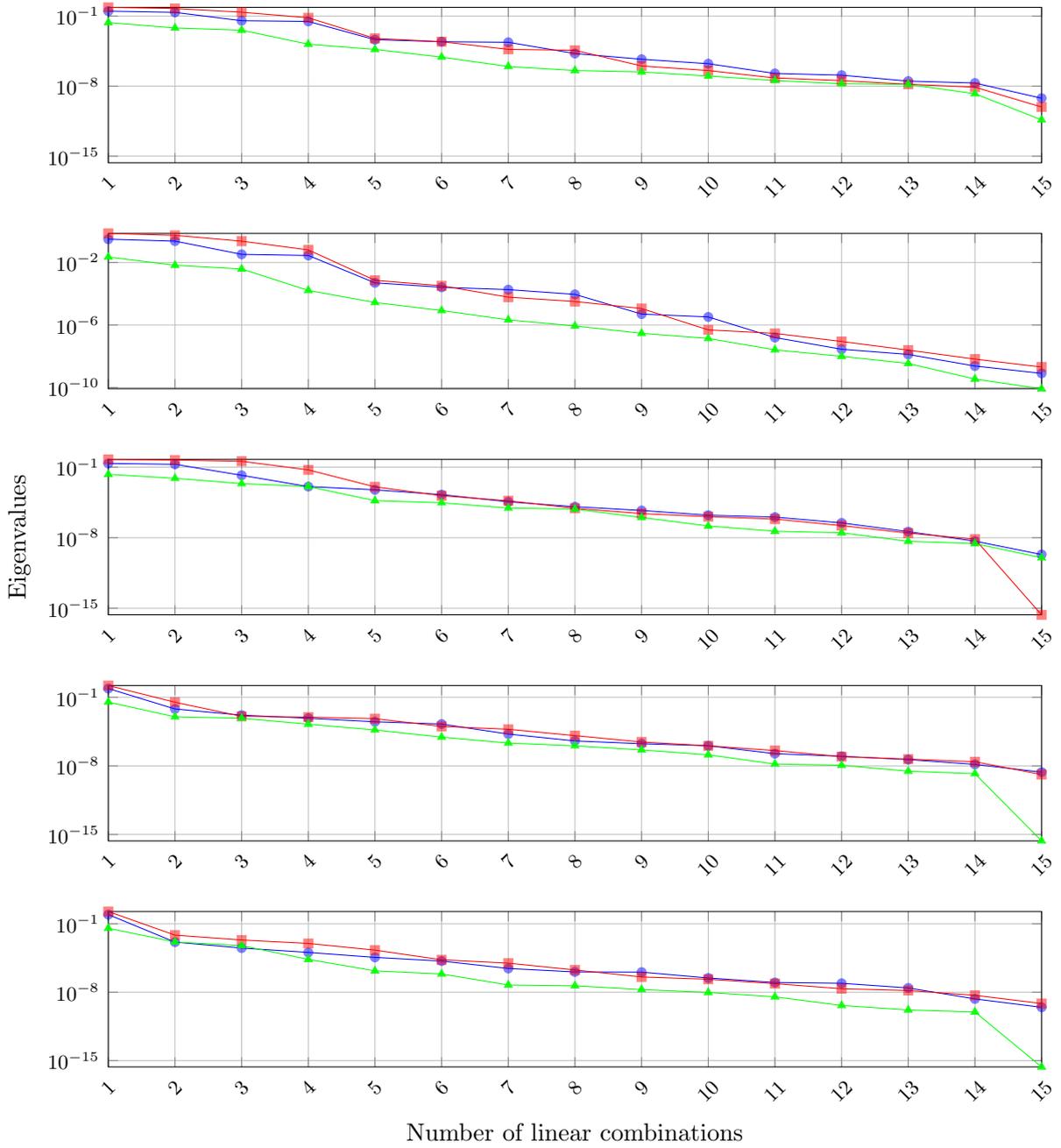
\begin{figure}[htbp]
\centering
\begin{adjustbox}{width=\textwidth}
\begin{tikzpicture} 
\begin{groupplot}[
  group style={group size=1 by 5, vertical sep=1.1cm
  },
  width=\textwidth,
  height=0.25\textwidth,
  ymode=log,
  ymin=2.22e-16,
  ymax = 3.7791e+00,
  xmin=1,
  xmax=15,
  xtick = {1,...,15},
  xticklabel style={font=\footnotesize,rotate=45},
  yticklabel style={font=\footnotesize},
  grid=major,
legend style={at={(1,1)}, anchor=north east, column sep=1ex, font=\footnotesize}
]

\nextgroupplot[ymax = 7.4192e-01]

\addplot+[blue, mark=*, mark options={fill opacity=0.5, opacity=0.5}]  coordinates {
(1, 3.0853e-01) (2, 2.3316e-01) (3, 3.4132e-02) 
(4, 2.9107e-02) (5, 4.3845e-04) (6, 2.7313e-04) 
(7, 2.4031e-04) (8, 1.8071e-05) (9, 4.8483e-06) 
(10, 1.7388e-06) (11, 1.8566e-07) (12, 1.2400e-07)
(13, 3.1738e-08) (14, 2.0399e-08) (15, 6.2163e-10)
};

\addplot+[red, mark=square*, mark options={fill opacity=0.5, opacity=0.5}] coordinates {
(1, 7.4192e-01) (2, 5.7111e-01) (3, 2.4144e-01) 
(4, 6.7495e-02) (5, 5.8942e-04) (6, 2.7562e-04)  
(7, 4.7086e-05) (8, 3.8206e-05) (9, 1.0410846e-06) 
(10, 3.6152e-07) (11, 6.5667e-08) (12, 3.5605e-08) 
(13, 1.5449e-08) (14, 7.8828e-09) (15, 8.2300729e-11)};

\addplot+[green, mark=triangle*, mark options={fill opacity=0.5, opacity=0.9}] coordinates {
(1, 2.2743e-02) (2, 6.5273e-03) (3, 3.8838e-03) 
(4, 1.5608e-04) (5, 4.6930e-05) (6, 7.9921e-06) 
(7, 9.2206e-07) (8, 3.6572e-07) (9, 2.6327e-07) 
(10, 1.0308e-07) (11, 3.6635e-08) (12, 1.7570e-08) 
(13, 1.5044e-08) (14, 1.7399e-09) (15, 4.2873e-12)
};

\nextgroupplot[ymax = 7.2442e-01, ymin = 8.3104e-11]

\addplot+[blue, mark=*, mark options={fill opacity=0.5, opacity=0.5}]  coordinates {
(1, 3.0710e-01) (2, 2.3166e-01) (3, 3.3040e-02) 
(4, 2.7998e-02) (5, 4.9609e-04) (6, 2.5787e-04) 
(7, 1.8293e-04) (8, 9.0754e-05) (9, 4.9831e-06) 
(10, 3.2686e-06) (11, 1.5799e-07) (12, 2.8233e-08) 
(13, 1.3102e-08) (14, 2.3807e-09) (15, 8.0687e-10)
};

\addplot+[red, mark=square*, mark options={fill opacity=0.5, opacity=0.5}] coordinates {
(1, 7.2442e-01) (2, 5.4343e-01) (3, 2.3036e-01) 
(4, 6.3745e-02) (5, 7.2163e-04) (6, 3.2051e-04) 
(7, 6.0188e-05) (8, 3.1682e-05) (9, 1.1322e-05) 
(10, 4.8634e-07) (11, 2.9070e-07) (12, 8.8203e-08) 
(13, 2.4836e-08) (14, 6.5812e-09) (15, 2.0525e-09)
};

\addplot+[green, mark=triangle*, mark options={fill opacity=0.5, opacity=0.9}] coordinates {
(1, 2.2231e-02) (2, 6.6117e-03) (3, 3.8218e-03)
(4, 1.6233e-04) (5, 2.7493e-05) (6, 8.4194e-06) 
(7, 2.1290e-06) (8, 8.6954e-07) (9, 2.9361e-07) 
(10, 1.3812e-07) (11, 2.5839e-08) (12, 9.9345e-09) 
(13, 3.4714e-09) (14, 3.5162e-10) (15, 8.3104e-11)
};

\nextgroupplot[ymax = 5.8144e-01, ylabel = Eigenvalues]

\addplot+[blue, mark=*, mark options={fill opacity=0.5, opacity=0.5}]  coordinates {
(1, 2.3328e-01) (2, 1.9140e-01) (3, 1.5610e-02) 
(4, 1.1795e-03) (5, 5.6543e-04) (6, 1.8789e-04) 
(7, 3.6388e-05) (8, 1.1887e-05) (9, 5.0019e-06) 
(10, 1.7003e-06) (11, 1.0991e-06) (12, 2.9310e-07) 
(13, 3.9670e-08) (14, 4.5243e-09) (15, 2.1437e-10)
};

\addplot+[red, mark=square*, mark options={fill opacity=0.5, opacity=0.5}] coordinates {
(1, 5.8144e-01) (2, 5.1339e-01) (3, 3.8573e-01) 
(4, 5.3470e-02) (5, 1.1127e-03) (6, 1.4737e-04) 
(7, 4.5876e-05) (8, 8.2546e-06) (9, 2.4604e-06) 
(10, 1.2267e-06) (11, 7.0365e-07) (12, 1.5856e-07) 
(13, 2.8242e-08) (14, 7.0888e-09) (15, 2.2200e-16)
};

\addplot+[green, mark=triangle*, mark options={fill opacity=0.5, opacity=0.9}] coordinates {
(1, 1.9400e-02) (2, 7.8613e-03) (3, 2.3470e-03) 
(4, 1.2026e-03) (5, 4.6929e-05) (6, 2.9403e-05) 
(7, 8.7387e-06) (8, 6.9712e-06) (9, 1.0032e-06) 
(10, 1.4517e-07) (11, 4.3634e-08) (12, 3.0676e-08) 
(13, 4.3392e-09) (14, 2.6045e-09) (15, 1.0252e-10)
};

\nextgroupplot[ymax = 1.5769e+00]

\addplot+[blue, mark=*, mark options={fill opacity=0.5, opacity=0.5}]  coordinates {
(1, 7.7679e-01) (2, 6.4102e-03) (3, 1.4986e-03) 
(4, 7.3088e-04) (5, 3.1733e-04) (6, 1.8542e-04) 
(7, 1.7591e-05) (8, 3.5113e-06) (9, 1.8263e-06) 
(10, 1.1349e-06) (11, 1.7702e-07) (12, 9.5142e-08) 
(13, 4.3929e-08) (14, 1.4295e-08) (15, 2.2954e-09)
};

\addplot+[red, mark=square*, mark options={fill opacity=0.5, opacity=0.5}] coordinates {
(1, 1.5769e+00) (2, 3.1225e-02) (3, 1.1334e-03) 
(4, 9.4427e-04) (5, 6.5776e-04) (6, 1.0676e-04) 
(7, 5.4342e-05) (8, 1.2233e-05) (9, 2.7267e-06) 
(10, 1.1209e-06) (11, 3.6403e-07) (12, 8.7712e-08) 
(13, 4.8895e-08) (14, 2.6339e-08) (15, 1.3343e-09)
};

\addplot+[green, mark=triangle*, mark options={fill opacity=0.5, opacity=0.9}] coordinates {
(1, 3.2360e-02) (2, 1.0167e-03) (3, 7.3586e-04) 
(4, 1.8036e-04) (5, 4.6665e-05) (6, 8.6308e-06) 
(7, 2.1413e-06) (8, 1.1245e-06) (9, 4.2681e-07) 
(10, 1.3523e-07) (11, 1.5457e-08) (12, 1.1640e-08)
(13, 2.8825e-09) (14, 1.6272e-09) (15, 2.2200e-16)
};

\nextgroupplot[ymax = 1.7954e+00, xlabel = Number of linear combinations]

\addplot+[blue, mark=*, mark options={fill opacity=0.5, opacity=0.5}]  coordinates {
(1, 8.7639e-01) (2, 1.3345e-03) (3, 3.2790e-04)
(4, 1.1755e-04) (5, 3.6456e-05) (6, 1.5282e-05)
(7, 2.6732e-06) (8, 1.1821e-06) (9, 1.1327e-06) 
(10, 2.8421e-07) (11, 9.8283e-08) (12, 8.1782e-08)
(13, 2.7365e-08) (14, 2.0519e-09) (15, 2.8510e-10)
};

\addplot+[red, mark=square*, mark options={fill opacity=0.5, opacity=0.5}] coordinates {
(1, 1.7954e+00) (2, 6.9057e-03) (3, 2.1834e-03)
(4, 9.7714e-04) (5, 2.0172e-04) (6, 2.0764e-05) 
(7, 9.3846e-06) (8, 1.8847e-06) (9, 3.6001e-07) 
(10, 2.1061e-07) (11, 7.4992e-08) (12, 2.2645e-08)
(13, 1.4842e-08) (14, 4.8059e-09) (15, 6.9460e-10)
};

\addplot+[green, mark=triangle*, mark options={fill opacity=0.5, opacity=0.9}] coordinates {
(1, 3.5175e-02) (2, 1.3641e-03) (3, 5.8521e-04)
(4, 2.2807e-05) (5, 1.5242e-06) (6, 7.3289e-07)
(7, 5.4609e-08) (8, 4.5159e-08) (9, 1.8656e-08)
(10, 9.4237e-09) (11, 3.3545e-09) (12, 4.4162e-10)
(13, 1.5284e-10) (14, 9.4831e-11) (15, 2.2200e-16)
};

\end{groupplot}
\end{tikzpicture}
\end{adjustbox}
\caption{Eigenvalues of the full matrix $\mTheta$ for $f_2^\alpha$ with different kernels and values of \(\alpha\). In each graph the colors refer to the kernels GA (blue), M2 (red) and M0 (green). From top to bottom: $\alpha = -2$, $\alpha = -1$, $\alpha = 0$, $\alpha = 1$ and $\alpha = 2$.}
\label{fig:4}
\end{figure}


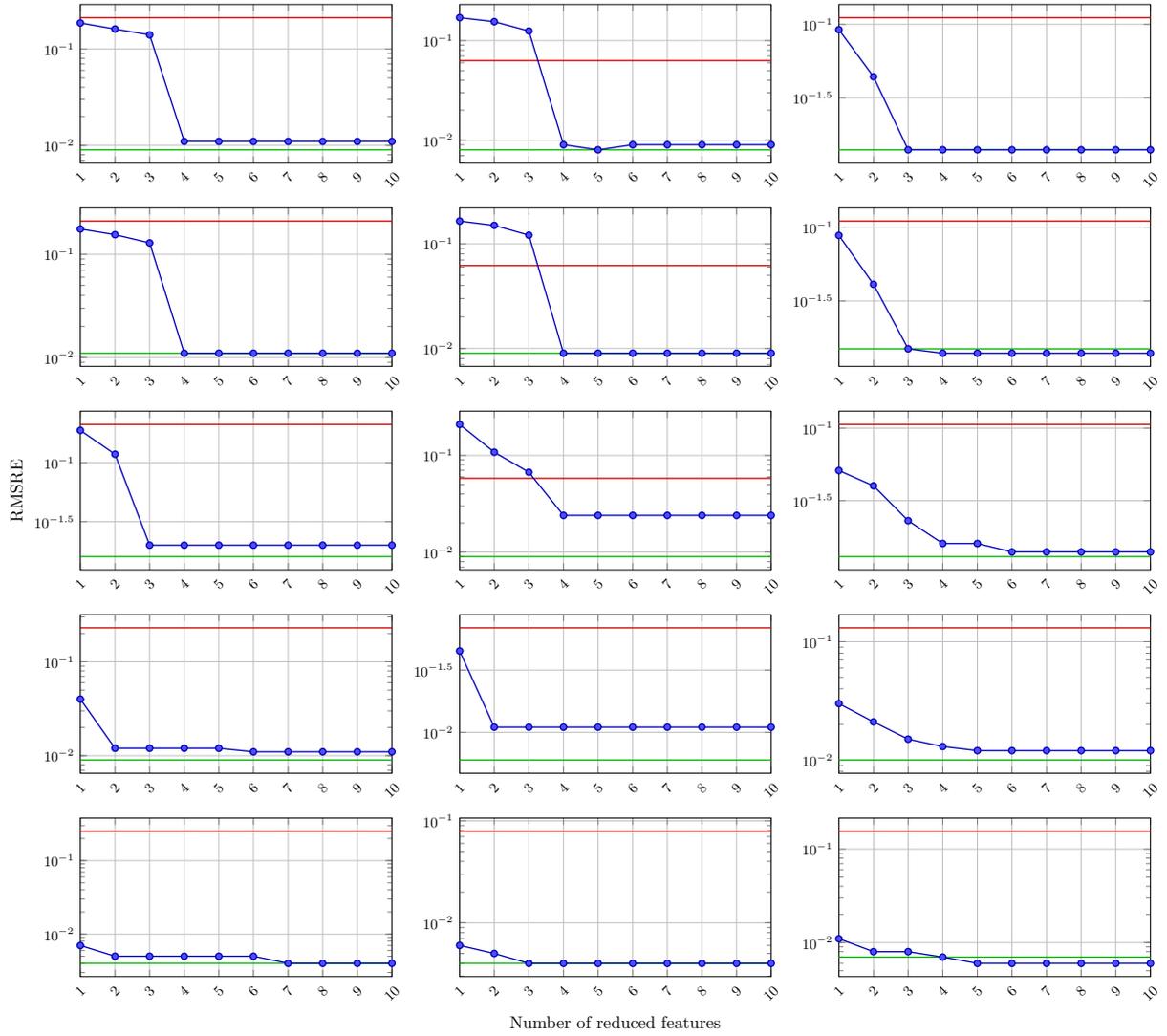
\begin{figure}[htbp]
\label{fig5}
\centering
\begin{adjustbox}{width=\textwidth}
\begin{tikzpicture} 
\begin{groupplot}[
  group style={group size=3 by 5, vertical sep=1.cm, horizontal sep=1.5cm},
  width=0.53\textwidth,
  height=0.32\textwidth,
  ymode=log,
  xmin=1,
  xmax=10,
  xtick={1,...,10},
  yticklabel style={font=\footnotesize},
  xticklabel style={font=\footnotesize,rotate=45},
  grid=major
]

\nextgroupplot[xticklabels={1,...,10}]
\addplot+[red!80!black, mark=none, thick] coordinates {
(1, 0.211) (10, 0.211)};
\addplot+[green!70!black, mark=none, thick] coordinates {
(1, 0.009) (10, 0.009)};
\addplot+[blue!70!black, mark=*, thick, mark options={fill=blue!70!white, draw=blue!70!black}] coordinates {
(1, 0.186)
(2, 0.161)
(3, 0.140)
(4, 0.011)
(5, 0.011)
(6, 0.011)
(7, 0.011)
(8, 0.011)
(9, 0.011)
(10, 0.011)
};

\nextgroupplot[xticklabels={1,...,10}]
\addplot+[red!80!black, mark=none, thick] coordinates {
(1, 0.063) (10, 0.063)};
\addplot+[green!70!black, mark=none, thick] coordinates {
(1, 0.008) (10, 0.008)};
\addplot+[blue!70!black, mark=*, thick, mark options={fill=blue!70!white, draw=blue!70!black}] coordinates {
(1, 0.170)
(2, 0.155)
(3, 0.125)
(4, 0.009)
(5, 0.008)
(6, 0.009)
(7, 0.009)
(8, 0.009)
(9, 0.009)
(10, 0.009)
};

\nextgroupplot[xticklabels={1,...,10}]
\addplot+[red!80!black, mark=none, thick] coordinates {
(1, 0.111) (10, 0.111)};
\addplot+[green!70!black, mark=none, thick] coordinates {
(1, 0.014) (10, 0.014)};
\addplot+[blue!70!black, mark=*, thick, mark options={fill=blue!70!white, draw=blue!70!black}] coordinates {
(1, 0.092)
(2, 0.044)
(3, 0.014)
(4, 0.014)
(5, 0.014)
(6, 0.014)
(7, 0.014)
(8, 0.014)
(9, 0.014)
(10, 0.014)
};


\nextgroupplot[xticklabels={1,...,10}]
\addplot+[red!80!black, mark=none, thick] coordinates {
(1, 0.210) (10, 0.210)};
\addplot+[green!70!black, mark=none, thick] coordinates {
(1, 0.011) (10, 0.011)};
\addplot+[blue!70!black, mark=*, thick, mark options={fill=blue!70!white, draw=blue!70!black}] coordinates {
(1, 0.176)
(2, 0.155)
(3, 0.129)
(4, 0.011)
(5, 0.011)
(6, 0.011)
(7, 0.011)
(8, 0.011)
(9, 0.011)
(10, 0.011)
};

\nextgroupplot[xticklabels={1,...,10}]
\addplot+[red!80!black, mark=none, thick] coordinates {
(1, 0.062) (10, 0.062)};
\addplot+[green!70!black, mark=none, thick] coordinates {
(1, 0.009) (10, 0.009)};
\addplot+[blue!70!black, mark=*, thick, mark options={fill=blue!70!white, draw=blue!70!black}] coordinates {
(1, 0.165)
(2, 0.150)
(3, 0.121)
(4, 0.009)
(5, 0.009)
(6, 0.009)
(7, 0.009)
(8, 0.009)
(9, 0.009)
(10, 0.009)
};

\nextgroupplot[xticklabels={1,...,10}]
\addplot+[red!80!black, mark=none, thick] coordinates {
(1, 0.110) (10, 0.110)};
\addplot+[green!70!black, mark=none, thick] coordinates {
(1, 0.015) (10, 0.015)};
\addplot+[blue!70!black, mark=*, thick, mark options={fill=blue!70!white, draw=blue!70!black}] coordinates {
(1, 0.088)
(2, 0.041)
(3, 0.015)
(4, 0.014)
(5, 0.014)
(6, 0.014)
(7, 0.014)
(8, 0.014)
(9, 0.014)
(10, 0.014)
};


\nextgroupplot[xticklabels={1,...,10}, ylabel = RMSRE]
\addplot+[red!80!black, mark=none, thick] coordinates {
(1, 0.211) (10, 0.211)};
\addplot+[green!70!black, mark=none, thick] coordinates {
(1, 0.016) (10, 0.016)};
\addplot+[blue!70!black, mark=*, thick, mark options={fill=blue!70!white, draw=blue!70!black}] coordinates {
(1, 0.188)
(2, 0.118)
(3, 0.020)
(4, 0.020)
(5, 0.020)
(6, 0.020)
(7, 0.020)
(8, 0.020)
(9, 0.020)
(10, 0.020)
};

\nextgroupplot[xticklabels={1,...,10}]
\addplot+[red!80!black, mark=none, thick] coordinates {
(1, 0.058) (10, 0.058)};
\addplot+[green!70!black, mark=none, thick] coordinates {
(1, 0.009) (10, 0.009)};
\addplot+[blue!70!black, mark=*, thick, mark options={fill=blue!70!white, draw=blue!70!black}] coordinates {
(1, 0.209)
(2, 0.108)
(3, 0.067)
(4, 0.024)
(5, 0.024)
(6, 0.024)
(7, 0.024)
(8, 0.024)
(9, 0.024)
(10, 0.024)
};

\nextgroupplot[xticklabels={1,...,10}]
\addplot+[red!80!black, mark=none, thick] coordinates {
(1, 0.106) (10, 0.106)};
\addplot+[green!70!black, mark=none, thick] coordinates {
(1, 0.013) (10, 0.013)};
\addplot+[blue!70!black, mark=*, thick, mark options={fill=blue!70!white, draw=blue!70!black}] coordinates {
(1, 0.051)
(2, 0.040)
(3, 0.023)
(4, 0.016)
(5, 0.016)
(6, 0.014)
(7, 0.014)
(8, 0.014)
(9, 0.014)
(10, 0.014)
};


\nextgroupplot[xticklabels={1,...,10}]
\addplot+[red!80!black, mark=none, thick] coordinates {
(1, 0.230) (10, 0.230)};
\addplot+[green!70!black, mark=none, thick] coordinates {
(1, 0.009) (10, 0.009)};
\addplot+[blue!70!black, mark=*, thick, mark options={fill=blue!70!white, draw=blue!70!black}] coordinates {
(1, 0.040)
(2, 0.012)
(3, 0.012)
(4, 0.012)
(5, 0.012)
(6, 0.011)
(7, 0.011)
(8, 0.011)
(9, 0.011)
(10, 0.011)
};

\nextgroupplot[xticklabels={1,...,10}]
\addplot+[red!80!black, mark=none, thick] coordinates {
(1, 0.069) (10, 0.069)};
\addplot+[green!70!black, mark=none, thick] coordinates {
(1, 0.006) (10, 0.006)};
\addplot+[blue!70!black, mark=*, thick, mark options={fill=blue!70!white, draw=blue!70!black}] coordinates {
(1, 0.045)
(2, 0.011)
(3, 0.011)
(4, 0.011)
(5, 0.011)
(6, 0.011)
(7, 0.011)
(8, 0.011)
(9, 0.011)
(10, 0.011)
};

\nextgroupplot[xticklabels={1,...,10}]
\addplot+[red!80!black, mark=none, thick] coordinates {
(1, 0.131) (10, 0.131)};
\addplot+[green!70!black, mark=none, thick] coordinates {
(1, 0.010) (10, 0.010)};
\addplot+[blue!70!black, mark=*, thick, mark options={fill=blue!70!white, draw=blue!70!black}] coordinates {
(1, 0.030)
(2, 0.021)
(3, 0.015)
(4, 0.013)
(5, 0.012)
(6, 0.012)
(7, 0.012)
(8, 0.012)
(9, 0.012)
(10, 0.012)
};


\nextgroupplot[xticklabels={1,...,10}]
\addplot+[red!80!black, mark=none, thick] coordinates {
(1, 0.251) (10, 0.251)};
\addplot+[green!70!black, mark=none, thick] coordinates {
(1, 0.004) (10, 0.004)};
\addplot+[blue!70!black, mark=*, thick, mark options={fill=blue!70!white, draw=blue!70!black}] coordinates {
(1, 0.007)
(2, 0.005)
(3, 0.005)
(4, 0.005)
(5, 0.005)
(6, 0.005)
(7, 0.004)
(8, 0.004)
(9, 0.004)
(10, 0.004)
};

\nextgroupplot[xticklabels={1,...,10}, xlabel=Number of reduced features]
\addplot+[red!80!black, mark=none, thick] coordinates {
(1, 0.079) (10, 0.079)};
\addplot+[green!70!black, mark=none, thick] coordinates {
(1, 0.004) (10, 0.004)};
\addplot+[blue!70!black, mark=*, thick, mark options={fill=blue!70!white, draw=blue!70!black}] coordinates {
(1, 0.006)
(2, 0.005)
(3, 0.004)
(4, 0.004)
(5, 0.004)
(6, 0.004)
(7, 0.004)
(8, 0.004)
(9, 0.004)
(10, 0.004)
};

\nextgroupplot[xticklabels={1,...,10}]
\addplot+[red!80!black, mark=none, thick] coordinates {
(1, 0.155) (10, 0.155)};
\addplot+[green!70!black, mark=none, thick] coordinates {
(1, 0.007) (10, 0.007)};
\addplot+[blue!70!black, mark=*, thick, mark options={fill=blue!70!white, draw=blue!70!black}] coordinates {
(1, 0.011)
(2, 0.008)
(3, 0.008)
(4, 0.007)
(5, 0.006)
(6, 0.006)
(7, 0.006)
(8, 0.006)
(9, 0.006)
(10, 0.006)
};

\end{groupplot}
\end{tikzpicture}
\end{adjustbox}
\caption{Comparison of the RMSEs for $f_2^\alpha$ with different kernels and values of \(\alpha\). From left to right: GA, M2 and M0. From top to bottom: $\alpha = -2$, $\alpha = -1$, $\alpha = 0$, $\alpha = 1$ and $\alpha = 2$. The red, green, and blue lines correspond to models using all available features, those selected via the diagonal version of the 2L-FUSE algorithm, and those obtained by considering up to ten feature combinations, respectively.}
\label{fig:5}
\end{figure}

\subsection{Experiments with real-world data}

In order to assess the robustness of the 2L-FUSE scheme, we consider an application within the field of solar physics, focusing on the prediction of a geomagnetic index known as SYM-H. This ground-based index provides a high-resolution measure of variations of Earth's magnetic field, and is therefore one of the key parameters used to understand the influence of solar activity on the Earth's magnetosphere. Geomagnetic storms are the main manifestations of solar activity at Earth and they are driven by the arrival of large masses of plasma and magnetic field structures originating from solar eruptions. In such conditions, the SYM-H index typically exhibits a rapid decrease, which constitutes the characteristic signature of a geomagnetic storm. The task consists in building a model capable to predict the SYM-H value one hour ahead from in-situ measurements of solar wind parameters collected by multiple spacecrafts.

The dataset at our disposal spans a period of approximately 15 years, from (January 1-st, 2005) to (December 31-st, 2019) resulting in \(N = 7888320\) 1-minute cadence points in a 14-dimensional space. Each sample consists of both measured and derived solar wind quantities, along with the corresponding value of the SYM-H index. The physical parameters are as follows: B,  B$_{\rm{x}}$, B$_{\rm{y}}$, and B$_{\rm{z}}$ [nT] (the magnetic field intensity and its three components); V$_{\rm{x}}$, V$_{\rm{y}}$, and V$_{\rm{z}}$ [Km/s] (the three coordinates of the solar wind velocity); $\rho$ [cm$^{-3}$] (the proton density number); T (the proton temperature); E$_{\rm{k}}$, E$_{\rm{m}}$, E$_{\rm{t}}$ (the kinetic, magnetic and total energies); and H$_{\rm{m}}$ (the magnetic helicity).

In this specific case, as an additional pre-processing step, beyond those already described, the data was resampled and aggregated by hours.

In this scenario we use the {Matérn} $C^0$ kernel, as it might be more suitable with real-world data affected by noise. In Table \ref{tab:rankingsolar} the results obtained by the use of 2L-FUSE in the diagonal case are presented: the most relevant features associated with non-zero eigenvalues  are listed in descending order. We can observe that the selected features are indeed the most representative from a physical viewpoint. Specifically, the vertical component B$_\mathrm{z}$ of the magnetic field vector is also significant, as it governs the magnetic reconnection process between the solar wind and Earth's magnetosphere, thus playing a crucial role in the transfer of energy from the solar wind to the Earth system. 
Moreover, this result is consistent with findings from previous research (see \cite{guastavinoetal2024,Camattari}). In those studies --- although conducted with slightly different purposes, within a classification setting and involving more complex deep learning architectures --- several feature selection algorithms identified B$_\mathrm{z}$ and V$_\mathrm{x}$ among the most relevant features in predicting the behavior of the SYM-H. Furthermore, the above mentioned papers show that discarding some redundancy, i.e., some parameters, generally leads better accuracy scores. The results obtained within the 2L-FUSE approach confirm that SYM-H is, as expected, predictive of its own value at the next hour, particularly in the main phase of the storm. Nevertheless, the physical features contain traces of transient solar structures that will generate the geomagnetic disturbance.


For the non-diagonal case, the RMSEs are reported in Figure \ref{fig:solar}. We observe that the use of the selected features outperforms the scores obtained using the full set of parameters, confirming that, even in real-world scenarios, some of them are indeed redundant. Moreover, unlike the simulated cases, the error increases as more linear combinations are considered, highlighting that a good approximation of the target can thus be achieved with a few combinations, whereas the subsequent ones introduce redundant information which may cause loss of accuracy. 

\begin{table}[h!]
\centering
\small
\renewcommand{\arraystretch}{1.2}
\begin{tabular}{cc}
\multicolumn{1}{c}{\text{Feature}} & \multicolumn{1}{c}{\text{Eigenvalue}} \\ 
\hline
\hline
\multicolumn{1}{c}{SYM-H} & \multicolumn{1}{c}{
3.45e+02
} \\

\multicolumn{1}{c}{B$_{\rm{z}}$} & \multicolumn{1}{c}{
1.78e+01
} \\

\multicolumn{1}{c}{V$_{\rm{x}}$} & \multicolumn{1}{c}{
7.20e-01 
} \\

\multicolumn{1}{c}{B} & \multicolumn{1}{c}{
3.84e-01 
} \\

\multicolumn{1}{c}{T} & \multicolumn{1}{c}{
4.53e-02 
} \\
\hline
\end{tabular}
\caption{Feature ranking based on the eigenvalues of diagonal matrix \(\mTheta\) for the solar dataset.}
\label{tab:rankingsolar}
\end{table}


\begin{figure}[htbp]
\centering
\begin{adjustbox}{width=\textwidth}
\begin{tikzpicture} 
\begin{groupplot}[
  group style={group size=1 by 1, vertical sep=-2.4cm},
  width=\textwidth,
  height=0.4\textwidth,
  xmin=1, 
  xmax=13, 
  xtick={1,...,13},
  ylabel={RMSE},
  xlabel = {Number of linear combinations},
  yticklabel style={font=\footnotesize},
  xticklabel style={font=\footnotesize,rotate=45},
  grid=major,
legend style={at={(1,1)}, anchor=north east, column sep=1ex, font=\footnotesize}
]

\nextgroupplot[]
\addplot+[red!80!black, mark=none, thick]  
coordinates {(1, 4.83016415769272) (13, 4.83016415769272)
};

\addplot+[green!70!black, mark=none, thick]  coordinates {
(1, 3.398554050943968) (13, 3.398554050943968)
};

\addplot+[blue!70!black, mark=*, thick, mark options={fill=blue!70!white, draw=blue!70!black} 
]  coordinates {
(1, 3.694692786682)
(2, 3.885948334523)
(3, 4.042534463073)
(4, 4.294973651399)
(5, 4.413573094152)
(6, 4.596133943469)
(7, 4.765399245768)
(8, 5.041760438978) 
(9, 5.102631203794)
(10, 5.176085280275)
(11, 5.234612847248)
(12, 5.244959785389)
(13, 5.244475909818)
};
\end{groupplot}
\end{tikzpicture}
\end{adjustbox}
\caption{Comparison the RMSEs for the solar dataset with  all available features, those selected via
the diagonal version of the 2L-FUSE algorithm, and those obtained by considering up to ten feature
combinations, respectively plotted in red, green and blue.
}
\label{fig:solar}
\end{figure}
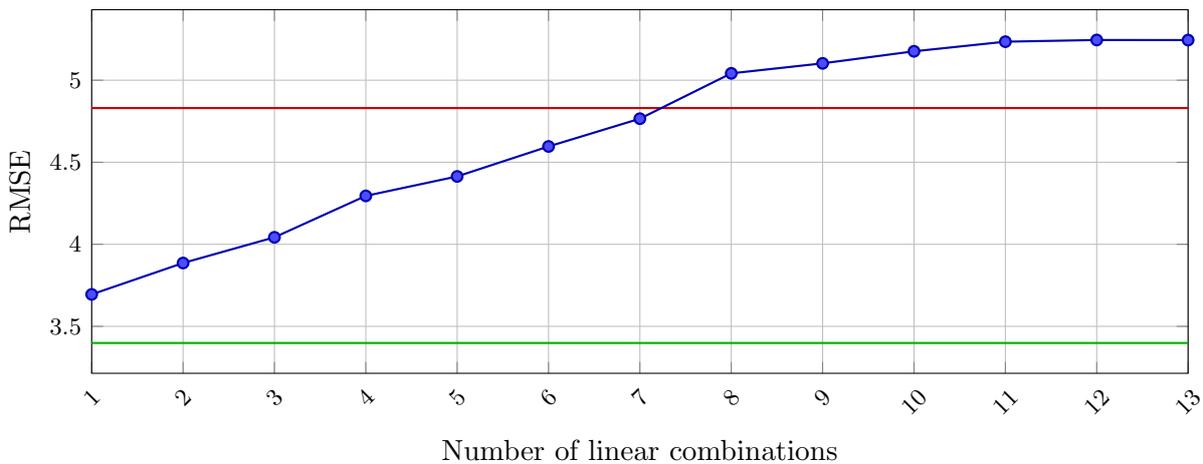

\section{Conclusions}
\label{conclusions}

In this work, we introduced the 2L-FUSE approach, a novel strategy for feature understanding and sparsity enhancement in kernel-based learning models. By learning a data-adaptive kernel metric through a two-layered kernel machine architecture, we were able to capture anisotropic and rotated structures in the input space, extending beyond the limitations of traditional isotropic kernel models.

From a theoretical viewpoint, we demonstrated that our approach is equivalent to performing interpolation in a transformed feature space, where the learned kernel metric governs the projection directions. We further showed that convergence guarantees, such as classical error bounds based on power functions and fill distances, still apply within this framework.

Empirical results on both synthetic and real-world datasets confirmed that 2L-FUSE can identify a minimal yet highly informative set of features without compromising predictive accuracy. Notably, in real-world scenarios like solar wind forecasting, the selected features aligned well with established physical interpretations, highlighting the interpretability of the method.

\section*{Acknowledgments}
Emma Perracchione, Fabiana Camattari and Sabrina Guastavino kindly acknowledge the support of the Fondazione Compagnia di San Paolo within the framework of the Artificial Intelligence Call for Proposals, AIxtreme project (ID Rol: 71708) and the GNCS-IN$\delta$AM PIANIS project ``Problemi Inversi e Approssimazione Numerica in Imaging Solare''. Emma Perracchione is further supported by the GOSSIP project (``Greedy Optimal Sampling for Solar Inverse Problems'') – funded by the Ministero dell’Universit\`a e della Ricerca (MUR) -  within the PRIN 2022 program, CUP: E53C24002330001. All the authors acknowledge Tizian Wenzel for the useful discussions.

\end{document}